\theoremstyle{plain}
\newtheorem{Theorem}{Theorem}[section]
\newtheorem{Lemma}[Theorem]{Lemma}
\newtheorem{Corollary}[Theorem]{Corollary}
\newtheorem{Proposition}[Theorem]{Proposition}
\theoremstyle{remark}
\newtheorem{Example}[Theorem]{Example}
\newtheorem{Definition}[Theorem]{Definition}
\newtheorem{Construction}[Theorem]{Construction}
\newtheorem{Setting}[Theorem]{Setting}
\address{Fachbereich Mathematik und Statistik, University of 
Konstanz, 78457 Konstanz, Germany}
\email{arno.fehm@uni-konstanz.de}
\email{alex.prestel@uni-konstanz.de}
\begin{document}

\title{Uniform definability of henselian valuation rings in the Macintyre language}
\author{Arno Fehm and Alexander Prestel}
\thanks{\today}

\begin{abstract}
We discuss definability of henselian valuation rings in the Macintyre language $\mathcal{L}_{\rm Mac}$,
the language of rings expanded by $n$-th power predicates.
In particular, we show that henselian valuation rings with finite or Hilbertian residue field
are uniformly $\exists$-$\emptyset$-definable in $\mathcal{L}_{\rm Mac}$,
and henselian valuation rings with value group $\mathbb{Z}$ are uniformly $\exists\forall$-$\emptyset$-definable in the ring language,
but not uniformly $\exists$-$\emptyset$-definable in $\mathcal{L}_{\rm Mac}$.
We apply these results to local fields $\mathbb{Q}_p$ and $\mathbb{F}_p((t))$, as well as to higher dimensional local fields.
\end{abstract}

\maketitle

\section{Introduction}

The question of definability of henselian valuation rings in their quotient fields goes back at least to Julia Robinson, who observed that
the ring of $p$-adic integers $\mathbb{Z}_p$ can be characterized inside the field of $p$-adic numbers $\mathbb{Q}_p$ 
purely algebraically, for example for odd prime numbers $p$ as
$$
 \mathbb{Z}_p = \left\{x\in\mathbb{Q}_p : (\exists y\in\mathbb{Q}_p)(y^2=1+px^2)\right\}.
$$
This definition of the henselian valuation ring of the local field $\mathbb{Q}_p$
is existential (or diophantine) and parameter-free ($\exists$-$\emptyset$, for short),
and it depends on $p$.
For the local fields $\mathbb{F}_p((t))$, an existential parameter-free definition of the henselian valuation ring $\mathbb{F}_p[[t]]$
is much less obvious and was given only recently in \cite{AnscombeKoenigsmann}. Also this definition depends heavily on $p$.

Of particular importance in this subject
and in applications to diophantine geometry and the model theory of fields
is the question whether there are {\em uniform} definitions,
for example of $\mathbb{Z}_p$ in $\mathbb{Q}_p$ independent of $p$, and how complex such definitions have to be.
It is known (see for example \cite{Cluckersetal})
that there cannot be a uniform {\em existential} definition of $\mathbb{Z}_p$ in $\mathbb{Q}_p$ in the ring language $\mathcal{L}_{\rm ring}=\{+,-,\cdot,0,1\}$,
but partial uniformity results were obtained in \cite{Fehm}.
Similarly, partially uniform existential definitions of valuation rings of $\mathbb{Q}$ play a crucial role in the celebrated work \cite{Koenigsmann}.

Although the natural language to pose such questions is the ring language,
in the study of the theory of $\mathbb{Q}_p$ also the so-called {\em Macintyre language}
$$
 \mathcal{L}_{\rm Mac}=\mathcal{L}_{\rm ring}\cup\left\{ P_n : n\in\mathbb{N}\right\},
$$
where each $P_n$ is a unary predicate symbol interpreted as the subset of $n$-th powers of the field, occurs naturally, cf.~\cite{PR}.
In this language, the following definition has recently been obtained in \cite[Theorem 3]{Cluckersetal} using results from the model theory of pseudo-finite fields:

\begin{Theorem}[Cluckers-Derakhshan-Leenknegt-Macintyre]\label{thm:CDLM}
There is an $\exists$-$\emptyset$-formula in $\mathcal{L}_{\rm Mac}$ that defines the valuation ring of every henselian valuation
with residue field finite or pseudo-finite of characteristic not $2$.
\end{Theorem}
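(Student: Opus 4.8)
The goal is a single $\exists$-$\emptyset$-formula $\varphi(x)$ of $\mathcal{L}_{\rm Mac}$ with $(K,v)\models\forall x\,(\varphi(x)\leftrightarrow x\in\mathcal{O}_v)$ whenever $(K,v)$ is henselian with residue field $\kappa$ finite or pseudofinite of characteristic $\neq 2$. The template is Julia Robinson's $\mathbb{Z}_p=\{x:\exists y\,(y^2=1+px^2)\}$ for odd $p$: if $x\in\mathbb{Z}_p$ then $1+px^2\in 1+\mathfrak{m}_v\subseteq(\mathbb{Q}_p^\times)^2$ by Hensel's lemma (this is where $\operatorname{char}\kappa\neq 2$ is used), while if $v(x)<0$ then $1+px^2$ has negative odd valuation and hence is a non-square. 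The auxiliary element $p$ plays a double role: its valuation must be positive, so that $px^2\in\mathfrak{m}_v$ for $x\in\mathcal{O}_v$, yet small enough that it cannot cancel a negative valuation of $x$. Neither requirement is met uniformly by a fixed rational multiplier $c$ in place of $p$ -- by Chebotarev's theorem $c$ is a square in some pseudofinite residue field, and then $1+cx^2$ is already a square for every $x$ of negative valuation -- while a quantified multiplier, even one forced to be a non-square by a literal $\neg P_n$, can always be chosen of arbitrarily large valuation, whereupon it absorbs $x$. So $\varphi$ must produce auxiliary elements that are at once of positive and of `bounded' valuation, uniformly in $(K,v)$.

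The plan is to replace the single equation by a finite existential formula $\exists y_1\cdots\exists y_r\,\theta(x,\bar y)$, with $\theta$ quantifier-free in $\mathcal{L}_{\rm Mac}$ -- a Boolean combination of polynomial (in)equations over $\mathbb{Z}$ together with the predicates $P_{n_j}$ and their negations -- arranged so that one block of the conditions pins down a constrained surrogate $\pi$ for a uniformizer among the $y_k$ (of positive valuation, but unable to cancel $x$), after which a second block encodes, for $x\in\mathcal{O}_v$, membership of the relevant polynomial values in $1+\mathfrak{m}_v$. The forward implication $x\in\mathcal{O}_v\Rightarrow\varphi(x)$ should then be a direct Hensel-lifting computation: one exhibits witnesses $\bar y$ making the relevant values lie in $1+\mathfrak{m}_v\subseteq(K^\times)^{n_j}$, valid for every $n_j$ coprime to $\operatorname{char}\kappa$; since $\operatorname{char}\kappa$ ranges over $0$ and the odd primes, one either takes all $n_j$ to be powers of $2$, removing the issue, or absorbs the finitely many residue characteristics dividing some $n_j$ into a case split built into $\theta$.

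The reverse implication $\varphi(x)\Rightarrow x\in\mathcal{O}_v$ is the crux, and is where the model theory of pseudofinite fields enters. By Ax's theorem the residue fields in question are exactly the models of the theory of finite fields of characteristic $\neq 2$, and every pseudofinite field is elementarily equivalent to an ultraproduct of finite fields; so by a standard compactness and {\L}o\'s argument it suffices to show, for every finite field $\mathbb{F}_q$ of odd order and every henselian $(K,v)$ with residue field $\mathbb{F}_q$, that $v(x)<0$ makes $\theta(x,\bar y)$ unsatisfiable. When the value group has a least positive element this is Julia Robinson's parity argument, run with the $n_j$ in place of $2$: the constraints force some relevant value to have valuation not divisible by $n_j$. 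The delicate case is a divisible value group, where no such valuation-theoretic obstruction survives and the contradiction must be extracted from the residue field: passing to residues, one uses the known structure of $\mathcal{L}_{\rm Mac}$-definable sets in finite and pseudofinite fields of odd characteristic (Ax; Lang--Weil point counts; Chebotarev) -- for instance that every element is a sum of two squares, and that $\mathbb{F}_q^\times/(\mathbb{F}_q^\times)^{n_j}$ is cyclic of order $\gcd(n_j,q-1)$ -- to force the residue of some relevant value outside $(\mathbb{F}_q^\times)^{n_j}$. One must be wary here of non-units whose residue datum is nonetheless a power -- for instance $t^{-1}$ in $\mathbb{F}_q((t^{\mathbb{Q}}))$ -- which no single clause of this kind excludes, so $\theta$ must incorporate a disjunction of several clauses covering all such $x$ at once.

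I expect the main obstacle to be exactly this last design problem: exhibiting one finite choice of the integers $n_j$, the polynomials, and the Boolean structure of $\theta$ for which the reverse implication holds uniformly over all value groups (in particular all divisible ones), all residue characteristics $\neq 2$, and both finite and pseudofinite residue fields simultaneously, with the reduction to finite fields surviving the finitely many small $q$. Granting such a choice, the forward Hensel-lifting computation and the backward point-counting-and-model-theory argument combine to yield the required uniform $\exists$-$\emptyset$-definition of $\mathcal{O}_v$ in $\mathcal{L}_{\rm Mac}$.
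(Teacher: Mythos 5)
There is a genuine gap: your text is a strategy outline rather than a proof. The heart of the matter --- the actual choice of the exponents $n_j$, the polynomials, and the Boolean structure of $\theta$, together with the verification of the backward implication $\varphi(x)\Rightarrow x\in\mathcal{O}_v$ uniformly over all value groups (in particular divisible ones) and all residue characteristics $\neq 2$ --- is exactly the step you yourself flag as ``the main obstacle'' and then leave open, so neither direction is established for any concrete formula. Moreover, the one reduction you do propose is not sound as stated: passing from pseudofinite to finite residue fields ``by compactness and {\L}o\'s'' would require transferring the truth of a fixed $\mathcal{L}_{\rm Mac}$-sentence about the \emph{valued field} $(K,v)$ along elementary equivalence of the \emph{residue field}, and in positive residue characteristic there is no Ax--Kochen--Ershov principle to license this; a henselian field with pseudofinite residue field of characteristic $p>2$ need not be elementarily equivalent to any henselian field with finite residue field, so the unsatisfiability of $\theta$ for $v(x)<0$ cannot simply be checked over $\mathbb{F}_q$ and lifted. (The original Cluckers--Derakhshan--Leenknegt--Macintyre argument works with definability and point counting in pseudofinite fields directly, precisely to avoid this.)

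For comparison, the paper does not construct any formula at all: Theorem~\ref{thm:CDLM} is quoted from \cite{Cluckersetal}, and the paper's own route to it (and to the stronger Theorem~\ref{thm:E}) is the non-constructive definability principle of Proposition~\ref{prop:Prestel} and Corollary~\ref{cor:Prestel}. That principle reduces the existence of a uniform $\exists$-$\emptyset$-formula in $\mathcal{L}_{\rm ring}\cup\{P_2\}$ to a purely valuation-theoretic statement: whenever $(K_1,\mathcal{O}_1)$, $(K_2,\mathcal{O}_2)$ are models of the axioms with $K_1\subseteq K_2$ and $(K_2^\times)^2\cap K_1=(K_1^\times)^2$, one must have $\mathcal{O}_1\subseteq\mathcal{O}_2$. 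This is then proved via $2$-henselianity: the restriction $w=v_2|_{K_1}$ is $2$-henselian (Lemma~\ref{lem:relpclosed}), it is comparable to $v_1$ by Koenigsmann's result since the residue field is not $2$-closed, and it cannot be strictly finer than $v_1$ because finite, PAC non-$2$-closed, and Hilbertian fields carry no non-trivial $2$-henselian valuations (Propositions~\ref{prop:hilbertian} and~\ref{prop:PAC}). If you want to salvage your approach, you would either have to carry out the explicit construction and the pseudofinite point-counting argument in full (essentially redoing \cite{Cluckersetal}), or switch to this abstract existence argument, which sidesteps the design problem entirely.
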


We recall that a field is pseudo-finite if it is perfect, pseudo-algebraically closed and has absolute Galois group $\hat{\mathbb{Z}}$.
Since one can eliminate the predicates $P_n$ by introducing new quantifiers, 
every $\mathcal{L}_{\rm Mac}$-definition gives rise to an $\mathcal{L}_{\rm ring}$-definition.
In particular, we have the following special case:

\begin{Corollary}\label{cor:CDLM}
There is an $\exists\forall$-$\emptyset$-formula in $\mathcal{L}_{\rm ring}$ that defines $\mathbb{Z}_p$ in $\mathbb{Q}_p$
and $\mathbb{F}_p[[t]]$ in $\mathbb{F}_p((t))$
for all odd prime numbers $p$.
\end{Corollary}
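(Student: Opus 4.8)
The plan is to derive the corollary directly from Theorem~\ref{thm:CDLM}, following the remark preceding it. \textbf{Step 1: identify the valuations.} The field $\mathbb{Q}_p$ carries the $p$-adic valuation $v_p$, which is henselian (as $\mathbb{Q}_p$ is complete) with valuation ring $\mathbb{Z}_p$ and residue field $\mathbb{F}_p$; likewise $\mathbb{F}_p((t))$ carries the $t$-adic valuation, which is henselian with valuation ring $\mathbb{F}_p[[t]]$ and residue field $\mathbb{F}_p$. For $p$ odd, $\mathbb{F}_p$ is finite of characteristic $\neq 2$, so Theorem~\ref{thm:CDLM} supplies a single $\exists$-$\emptyset$-formula $\varphi(x)$ in $\mathcal{L}_{\rm Mac}$ which, interpreted in $\mathbb{Q}_p$, defines $\mathbb{Z}_p$, and, interpreted in $\mathbb{F}_p((t))$, defines $\mathbb{F}_p[[t]]$, uniformly in the odd prime $p$.

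\textbf{Step 2: eliminate the power predicates.} It remains to replace $\varphi$ by an $\mathcal{L}_{\rm ring}$-formula of the stated complexity that is equivalent to it in every field. In any field $K$, the predicate $P_n$ is interpreted as the set of $n$-th powers, so for a term $t$ the atomic formula $P_n(t)$ is equivalent to the existential formula $\exists y\,(y^n = t)$ and its negation $\neg P_n(t)$ to the universal formula $\forall y\,(y^n \neq t)$. Write $\varphi(x) = \exists\bar{y}\,\psi(x,\bar{y})$ with $\psi$ quantifier-free, and bring $\psi$ into negation normal form, so that each $P_n$ occurs only in literals $P_n(t)$ or $\neg P_n(t)$. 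Substituting the above existential, respectively universal, $\mathcal{L}_{\rm ring}$-formulas for these literals (with fresh variables at each occurrence) and then moving all existential quantifiers to the front, merging them with $\bar{y}$, followed by all universal quantifiers, yields an $\exists\forall$-$\emptyset$-formula in $\mathcal{L}_{\rm ring}$ equivalent to $\varphi$ over all fields, hence with the required defining property in $\mathbb{Q}_p$ and in $\mathbb{F}_p((t))$.

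\textbf{On the difficulty.} There is essentially no obstacle here; this is a bookkeeping consequence of Theorem~\ref{thm:CDLM}. The only point requiring any care is the final prenexing step: one must observe that the newly introduced $\exists$- and $\forall$-quantifiers sit at the leaves of the Boolean structure of $\psi$ and are never nested within one another, so that pulling out first all existential and then all universal quantifiers is legitimate and produces exactly the $\exists\forall$ quantifier pattern rather than a longer alternation. (For $p = 2$ this route fails, since $\mathbb{F}_2$ has characteristic $2$ and Theorem~\ref{thm:CDLM} does not apply; this is why the statement is restricted to odd~$p$.)
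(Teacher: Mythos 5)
Your proposal is correct and follows exactly the paper's route: apply Theorem~\ref{thm:CDLM} to the $p$-adic and $t$-adic henselian valuations with finite residue field $\mathbb{F}_p$ of characteristic $\neq 2$, then eliminate each predicate $P_n$ via $\exists y\,(y^n=t)$ for positive and $\forall y\,(y^n\neq t)$ for negative occurrences, prenexing to an $\exists\forall$-formula in $\mathcal{L}_{\rm ring}$. The paper states this only as the one-line remark preceding the corollary, so your explicit treatment of the negation normal form and the non-nested prenexing is a faithful elaboration of the same argument.
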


The aim of this note is to discuss uniform definability of henselian valuation rings in the Macintyre language
for families containing the local fields $\mathbb{Q}_p$ and $\mathbb{F}_p((t))$.
Our results exploit both
their specific (finite) residue fields and their (discrete) value groups:

A first generalization of Corollary \ref{cor:CDLM} was already given by the second author in \cite[Theorem 1]{Prestel}.
Using an adaption of the machinery developed there, we prove a definability result for $p$-henselian valuations in
the Macintyre language (see Theorem \ref{thm:phens}),
which in particular implies the following generalization of Theorem \ref{thm:CDLM}:

\begin{Theorem}\label{thm:E}
There is an $\exists$-$\emptyset$-formula in the language $\mathcal{L}_{\rm Mac}$ that defines
the valuation ring of every henselian valuation with residue field of characteristic not $2$ which is finite, pseudo-algebraically closed but not $2$-closed, or Hilbertian.
\end{Theorem}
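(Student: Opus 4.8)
The plan is to unify the three cases---finite, PAC but not $2$-closed, and Hilbertian residue field---by producing a single existential $\mathcal{L}_{\rm Mac}$-formula that picks out the henselian valuation ring whenever the residue field lies in this class. The natural strategy is to adapt the ``$p$-henselian'' machinery announced as Theorem~\ref{thm:phens}: since the residue characteristic is not $2$ and each residue field in our list admits a proper Galois extension of degree prime to its characteristic (a quadratic extension, which exists because the field is not $2$-closed in the PAC case, is not algebraically closed being finite, and is non-closed being Hilbertian), the $2$-henselian topology on $K$ agrees with the henselian valuation topology, and one can hope to define the $2$-henselian valuation ring uniformly. The key point is that $\mathcal{L}_{\rm Mac}$ gives access to the predicate $P_2$, hence to the set of squares, which is exactly the data needed to express ``the quadratic extension behaves like an unramified extension over the valuation ring.''

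First I would recall from \cite{Prestel} and from the promised Theorem~\ref{thm:phens} the precise existential $\mathcal{L}_{\rm Mac}$-formula $\varphi(x)$ defining the $2$-henselian valuation ring $\mathcal{O}_v$ in any field $K$ for which the $2$-henselization machinery applies; the core of that formula is a Robinson-style scheme asserting solvability of certain quadratic (more generally, $2$-power) equations whose solvability over a $2$-henselian valued field is governed by Hensel's lemma for the prime $2$. Next I would verify that each of the three residue-field hypotheses guarantees both (i) that the henselian valuation $v$ is automatically $2$-henselian with the same valuation ring, and (ii) that the residue field $Kv$ is \emph{not} $2$-closed, so that there is a genuine obstruction living in the square class group of $Kv$ that the formula can detect. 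For finite fields of odd characteristic this is classical; for PAC-but-not-$2$-closed fields it is the hypothesis itself together with the fact that a PAC field which is not $2$-closed has a separable quadratic extension; for Hilbertian fields one uses that a Hilbertian field is never separably closed (indeed never $2$-closed, since $X^2-t$ has a Hilbertian-generic fiber behaviour) so again a quadratic obstruction exists.

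Then I would check the converse inclusion and the sharpness of the definition: that $\varphi(K)$ is \emph{exactly} $\mathcal{O}_v$ and not, say, a smaller or larger subring. The upper bound $\varphi(K)\subseteq\mathcal{O}_v$ follows because any $2$-henselian valuation ring of $K$ contains $\varphi(K)$ by construction, combined with the uniqueness (up to comparability) of the $2$-henselian valuation with non-$2$-closed residue field---so one must also invoke that among the comparable $2$-henselian valuations there is a canonical coarsest/finest one whose valuation ring the formula pins down, exactly as in the treatment of the canonical henselian valuation. The lower bound $\mathcal{O}_v\subseteq\varphi(K)$ is Hensel's lemma applied prime-to-$2$. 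Finally, since the formula $\varphi$ is fixed and parameter-free and the verification above shows it works for every henselian valuation with residue field in the stated class, this yields the theorem.

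The main obstacle I expect is step (i)--(ii) in the second paragraph: establishing uniformly, across the three rather different families of residue fields, that the given henselian valuation coincides with a $2$-henselian one and that the residue field carries a usable quadratic (or $2$-power) obstruction---in other words, that the abstract hypothesis ``residue field not $2$-closed, char $\neq 2$'' is met and is strong enough to make the Macintyre-language scheme both sound and complete. The PAC-not-$2$-closed case is the delicate one, because PAC fields can be quite wild and one must ensure the square-class data of the residue field is faithfully reflected in $K$ via the predicate $P_2$; I would handle it by reducing to the statement that a PAC field which is not $2$-closed has $-1$ or some element a non-square, hence a quadratic extension detectable by $P_2$, and then transferring this through the residue map using $2$-henselianity. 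Everything else---the finite and Hilbertian cases, and the Hensel's-lemma inclusions---should be routine given Theorem~\ref{thm:phens}.
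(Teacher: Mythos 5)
Your opening reduction (henselian $\Rightarrow$ $2$-henselian, $\zeta_2=-1\in K$ automatically, residue characteristic $\neq 2$, and the three residue classes matching the cases of Theorem~\ref{thm:phens}) is indeed exactly how the paper deduces Theorem~\ref{thm:E}. But since Theorem~\ref{thm:phens} is proved in this very paper, your proposal has to supply its proof, and there your sketch rests on a misconception: there is no explicit ``Robinson-style'' existential scheme in $\mathcal{L}_{\rm Mac}$ whose soundness and completeness one checks by Hensel's lemma. The argument is non-constructive. The formula comes from the abstract definability principle (Proposition~\ref{prop:Prestel}, specialized in Corollary~\ref{cor:Prestel}): for the elementary class $\Sigma$ of such valued fields, a uniform $\exists$-$\emptyset$-formula in $\mathcal{L}_{\rm ring}\cup\{P_2\}$ exists \emph{if and only if} $\mathcal{O}_1\subseteq\mathcal{O}_2$ whenever $(K_1,\mathcal{O}_1),(K_2,\mathcal{O}_2)\models\Sigma$ with $K_1$ a subfield of $K_2$ and $(K_2^\times)^2\cap K_1=(K_1^\times)^2$. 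Your proposal never formulates this criterion; instead it tries to verify ``$\varphi(K)\subseteq\mathcal{O}_v$'' and ``$\mathcal{O}_v\subseteq\varphi(K)$'' directly for a formula $\varphi$ that is never written down and, in this generality, is not available (even the authors do not exhibit one), invoking a ``canonical coarsest/finest'' valuation without an argument.

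Even granting the reduction to a monotonicity statement, the substantive inputs that make all three residue cases work are missing from your sketch. One needs: (i) Lemma~\ref{lem:relpclosed}, that the restriction $w=v_2|_{K_1}$ is again $2$-henselian, via the criterion $1+\mathfrak{m}\subseteq(K^\times)^2$ and the hypothesis $(K_2^\times)^2\cap K_1=(K_1^\times)^2$; (ii) comparability of $w$ and $v_1$, which uses only that the residue field $F_1$ of $v_1$ is not $2$-closed, by \cite[Proposition 3.1]{Koe95}; and crucially (iii) that $w$ cannot be \emph{strictly finer} than $v_1$, because that would induce a non-trivial $2$-henselian valuation on $F_1$ -- impossible since finite fields carry no non-trivial valuations, a PAC field with a non-trivial $2$-henselian valuation is $2$-closed (Proposition~\ref{prop:PAC}), and a Hilbertian field admits no non-trivial $2$-henselian valuation under these hypotheses (Proposition~\ref{prop:hilbertian}). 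Your argument only records the weaker fact that finite, PAC-not-$2$-closed and Hilbertian fields have a quadratic extension; that gives (ii) but not (iii), and (iii) is where the three cases are genuinely unified, yielding $\mathcal{O}_{v_1}\subseteq\mathcal{O}_w\subseteq\mathcal{O}_{v_2}$ and hence the hypothesis of Corollary~\ref{cor:Prestel}. So the gap is twofold: the non-constructive existence mechanism for the formula is absent, and the residue-field propositions that rule out the finer-valuation case are neither identified nor proved.
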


Note that ``pseudo-algebraically closed but not $2$-closed'' includes all pseudo-finite fields,
and ``Hilbertian'' includes in particular all global fields.

In another direction, we generalize Corollary \ref{cor:CDLM} by exploiting that the henselian
valuations on $\mathbb{Q}_p$ and $\mathbb{F}_p((t))$ 
have value group $\mathbb{Z}$.
Here, an old result of Ax \cite{Ax} shows that there is a uniform $\exists\forall\exists\forall$-$\emptyset$-definition
in $\mathcal{L}_{\rm ring}$ for such valuations. 
We again work with $p$-henselian valuations and prove a result (Proposition \ref{prop:Z}) that in particular
improves Ax' definition from $\exists\forall\exists\forall$ to $\exists\forall$:

\begin{Theorem}\label{thm:Z}
There is an $\exists\forall$-$\emptyset$-formula in the language $\mathcal{L}_{\rm ring}$ that defines
the valuation ring of every henselian valuation with value group $\mathbb{Z}$.
\end{Theorem}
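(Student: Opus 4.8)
The plan is to reduce the statement to a purely algebraic characterization of valuation rings with value group $\mathbb{Z}$ among all henselian valuation rings, and then to observe that this characterization is expressible by an $\exists\forall$-$\emptyset$-formula. The key point is that a henselian valuation ring $\mathcal{O}_v$ of a field $K$ has value group $\mathbb{Z}$ precisely when $v$ is a discrete rank-one valuation, i.e.\ when its maximal ideal $\mathfrak{m}_v$ is principal and $\bigcap_{n\geq 1}\mathfrak{m}_v^n=\{0\}$. Equivalently, there is a uniformizer $\pi\in K$ with $v(\pi)>0$ minimal positive, and every nonzero element of $K$ is, up to a unit, a power of $\pi$.

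First I would fix, as in the rest of the paper, a definable proxy for the valuation: by the work on $p$-henselian valuations alluded to before Proposition \ref{prop:Z} (which I am allowed to assume), one obtains a uniform $\exists\forall$-$\emptyset$-formula capturing $\mathcal{O}_v$ under the hypothesis that $v$ is henselian with value group $\mathbb{Z}$, provided one can \emph{assert} in a first-order way that the value group is $\mathbb{Z}$. So the real content is: (i) express ``$x$ lies in the henselian valuation ring associated with a uniformizer $\pi$'' and (ii) express ``the value group is $\mathbb{Z}$,'' both uniformly and within the $\exists\forall$ quantifier complexity. For (i), the standard trick is that for a discrete henselian valuation of residue characteristic $\neq$ something, divisibility of units by $n$-th powers detects the valuation; here, since we are in $\mathcal{L}_{\rm ring}$ and not $\mathcal{L}_{\rm Mac}$, I would instead use a Rumely/Koenigsmann-style existential predicate for the valuation ring built from a chosen $\pi$: the set $\{x : \exists y\ (1+\pi x^k = y^k \text{ for a suitable }k)\}$ or a sum-of-powers condition, which is $\exists$ once $\pi$ is fixed. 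For (ii), discreteness is the condition ``$\exists\pi$ with $v(\pi)>0$ such that no element has value strictly between $0$ and $v(\pi)$,'' i.e.\ $\pi$ is a uniformizer; the absence of intermediate values is a $\forall$-statement ``for all $x$, either $v(x)\leq 0$ or $v(x)\geq v(\pi)$,'' which combined with the $\exists\pi$ gives $\exists\forall$.

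The steps, in order: (1) Recall from Proposition \ref{prop:Z} (or adapt its proof) the uniform definability scheme for $p$-henselian valuations, and specialize to the case where the value group is $\mathbb{Z}$, isolating the auxiliary first-order conditions needed. (2) Write down an $\exists$-$\emptyset$-formula $\varphi(x,\pi)$ in $\mathcal{L}_{\rm ring}$ that, whenever $\pi$ is a uniformizer of a discrete henselian valuation $v$ on $K$, defines $\mathcal{O}_v$ — this is where one imports the classical Julia Robinson–type trick, made parameter-$\pi$ but otherwise uniform, handling residue characteristic $2$ and $p$-adic/positive-characteristic cases separately if necessary (using $1+\pi x^n=y^n$ with $n$ chosen so that the relevant Hensel lifting works, e.g.\ $n$ coprime to the residue characteristic, or a sum of several such conditions to cover all residue characteristics simultaneously). (3) Write the $\forall$-$\emptyset$-formula $\psi(\pi)$ asserting ``$\pi\neq 0$, $\varphi(\pi,\pi)$ holds (so $v(\pi)\geq 0$), $\varphi(1/\pi,\pi)$ fails (so $v(\pi)>0$), and for all $x\neq 0$, $\varphi(x/\pi,\pi)$ holds or $\varphi(\pi/x,\pi)$ holds'' — the last clause says $v(x)\geq v(\pi)$ or $v(x)\leq v(\pi)$... rather, one needs ``$v(x)\geq v(\pi)$ or $v(x)\leq 0$'', which forces no values strictly between $0$ and $v(\pi)$, hence (with henselianity and rank one, which must also be forced) value group $\mathbb{Z}$. (4) Assemble: the final formula is $\exists\pi\,(\psi(\pi)\wedge \varphi(x,\pi))$, which is $\exists\forall$ since $\psi$ is universal and $\varphi$ is existential — but one must check that nesting an $\exists$-formula $\varphi$ under the outer $\exists\pi$ does not break the prefix, which is fine, and that $\varphi$ appearing \emph{inside} $\psi$ (universally quantified) is acceptable: an $\exists$-formula under a $\forall$ is $\forall\exists$, which would push us to $\exists\forall\exists$. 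The fix is to replace the occurrences of $\varphi$ inside $\psi$ by their negations (which are $\forall$-formulas) wherever possible, and to absorb the unavoidable $\exists$ into the outer $\exists\pi$ by introducing the Skolem witnesses as additional existentially quantified parameters up front.

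The main obstacle I anticipate is exactly this last quantifier-complexity bookkeeping: getting a genuinely $\exists\forall$ (and not $\exists\forall\exists$) formula requires that the ``for all $x$, $v(x)\leq 0$ or $v(x)\geq v(\pi)$'' clause be expressed without an inner existential. This means $\varphi$ and its negation must each be reducible, after fixing $\pi$, to an existential and a universal formula respectively (i.e.\ the valuation ring defined by $\varphi(\cdot,\pi)$ must be $\exists\cap\forall$-definable with parameter $\pi$), and the witnesses making ``$\pi$ is a uniformizer of a henselian rank-one valuation'' true must be pulled out to the front. Ensuring uniformity across all residue characteristics — in particular handling residue characteristic $2$, where the $1+\pi x^2=y^2$ trick degenerates — simultaneously with keeping the prefix at $\exists\forall$ is the delicate part; I expect to handle it by taking a finite conjunction/disjunction over several exponents $n$ and by using that henselianity lets one lift approximate roots, so that the ``units are $n$-divisible'' characterization of $\mathcal{O}_v$ can be made to work with a single $n$ (say $n=3$, coprime to $2$) for residue characteristic $2$ and symmetrically for odd residue characteristic, the union of the relevant conditions being uniform.
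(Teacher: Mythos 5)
Your plan has a genuine gap, and it is exactly the one you flag as the ``main obstacle'' without resolving it. Your target formula $\exists\pi\,(\psi(\pi)\wedge\varphi(x,\pi))$ needs $\psi(\pi)$ to say, universally, ``for all $x$, $v(x)\le 0$ or $v(x)\ge v(\pi)$''; but with the valuation only accessible through the existential predicate $\varphi(\cdot,\pi)$, each such clause is an existential statement sitting under the universal quantifier, so the assembled formula is $\exists\forall\exists$, not $\exists\forall$. Neither proposed repair works: you cannot pull the inner witnesses out front as extra existential parameters, because they depend on the universally quantified $x$ (that would require a Skolem \emph{function}, not finitely many constants); and replacing $\varphi$ by its negation changes the asserted set rather than giving a universal definition of the same set. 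What you would actually need is a uniform $\forall$-definition (with parameter $\pi$) of the valuation ring in $\mathcal{L}_{\rm ring}$ to pair with the $\exists$-definition, and no such thing is provided --- indeed the paper remarks that even converting $\exists$-definitions into $\forall$-definitions is not known to be doable uniformly. A second, smaller confusion: the defining formula never needs to ``assert that the value group is $\mathbb{Z}$''; that is a hypothesis on the class of valued fields, not something the formula must verify. (Your uniformity worry about residue characteristic is also only partially met: a finite disjunction of Kummer-type conditions $1+\pi x^n=y^n$ has to be checked not to overshoot $\mathcal{O}_v$ in the bad characteristics, though this part is repairable.)

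The paper's route avoids certifying that the parameter is a uniformizer at all, and this is the missing idea. Working $2$-henselianly with the single polynomial $f(Y)=Y^2-Y$ (whose relevant root $y=1$ is simple in \emph{every} residue characteristic, which disposes of your characteristic-$2$ case distinction), one sets $R_a=\{x:\exists y\,(f(y)=ax^2)\}$ and uses only the weak certificate ``$1\in R_a$ and $a^{-1}\notin[R_a]^{4}$'' --- an existential plus a universal condition, hence compatible with an $\exists\forall$ prefix. The content is then Lemma \ref{lem:main}: for a discrete $2$-regular value group, any $a$ passing this certificate already satisfies $R_a\subseteq\mathcal{O}_v$ (a case analysis on $v(a)$ using $2$-regularity), while a genuine uniformizer $t$ passes it with $R_t=\mathcal{O}_v$ (Lemmas \ref{lem:B} and \ref{lem:Rt}). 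The union of $R_a$ over certified $a$ is therefore exactly $\mathcal{O}_v$, giving the explicit $\exists\forall$-formula of Proposition \ref{prop:Z}; Theorem \ref{thm:Z} follows since every henselian valuation is $2$-henselian and $\mathbb{Z}$ is discrete and $2$-regular. Without a substitute for Lemma \ref{lem:main} --- i.e.\ a way to make the ``discreteness'' certificate purely universal relative to existentially defined data --- your construction cannot reach the stated quantifier complexity.
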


We also show that in this generality, the result cannot be improved further to give an existential definition in the Macintyre language
(see Proposition \ref{prop:noE}):

\begin{Theorem}\label{thm:noE}
The $t$-adic henselian valuation on $\mathbb{C}((t))$ with value group $\mathbb{Z}$ cannot be defined
by an $\exists$-$\emptyset$-formula in the language $\mathcal{L}_{\rm Mac}$.
\end{Theorem}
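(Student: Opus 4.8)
\emph{Strategy.} Existential formulas persist along embeddings of structures, so I would produce an $\mathcal{L}_{\mathrm{Mac}}$-embedding $f\colon M\hookrightarrow N$ between two models of $\mathrm{Th}(\mathbb{C}((t)))$ carrying an element of the canonical henselian valuation ring of $M$ out of that of $N$. Write $v$ for the $t$-adic valuation and let $\chi$ be the $\exists\forall$-$\emptyset$-formula of Theorem~\ref{thm:Z}, which defines $\mathcal{O}_v$. If some $\exists$-$\emptyset$-formula $\varphi$ of $\mathcal{L}_{\mathrm{Mac}}$ also defined $\mathcal{O}_v$ in $\mathbb{C}((t))$, then $\forall x\,(\varphi(x)\leftrightarrow\chi(x))\in\mathrm{Th}(\mathbb{C}((t)))$, and --- granting (last paragraph) that $\chi$ defines $\mathcal{O}_{v_M}$ in $M$ and that $\chi(N)\subseteq\mathcal{O}_{v_N}$, where $v_M,v_N$ are the canonical henselian valuations --- such an $f$ together with an element $a$ with $a\in\mathcal{O}_{v_M}$ and $f(a)\notin\mathcal{O}_{v_N}$ gives a contradiction: $M\models\chi(a)$, hence $M\models\varphi(a)$, hence $N\models\varphi(f(a))$ (as $f$ is an $\mathcal{L}_{\mathrm{Mac}}$-embedding and $\varphi$ is existential), hence $N\models\chi(f(a))$, i.e.\ $f(a)\in\mathcal{O}_{v_N}$.

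\emph{Construction.} Put $M=\mathbb{C}((u))$, with canonical valuation $v_M=v_u$, value group $\mathbb{Z}$, residue field $\mathbb{C}$. The residue field $\mathbb{C}$ carries a valuation $\bar w$ with divisible value group $\mathbb{Q}$ and residue field again $\cong\mathbb{C}$ (standard, e.g.\ realize $\mathbb{C}$ as $\overline{F(s)}$ for a suitable algebraically closed $F$ and extend the $s$-adic valuation). Pick $a\in\mathbb{C}$ with $\bar w(a)=-1$; regarded in $M$, $a$ is a $u$-adic unit, so $a\in\mathcal{O}_{v_M}$. Let $w=\bar w\circ v_u$ be the composite valuation on $M$: it refines $v_u$, has residue field $\mathbb{C}$, value group $\Gamma_w=\mathbb{Z}\oplus_{\mathrm{lex}}\mathbb{Q}$ (with $\mathbb{Z}$ the more significant factor), and $w(a)=(0,-1)<0$. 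Since $\mathbb{Q}$ is divisible, $n\Gamma_w=n\mathbb{Z}\oplus_{\mathrm{lex}}\mathbb{Q}$, so for $x\in M^{\times}$ one has $w(x)\in n\Gamma_w\iff n\mid v_u(x)\iff x\in(M^{\times})^{n}$ (the last step because $v_u$ is henselian with algebraically closed residue field of characteristic $0$): $w$ still reads off the Macintyre predicates of $M$. Now set $\Gamma=\mathbb{Q}\oplus_{\mathrm{lex}}\mathbb{Q}\oplus_{\mathrm{lex}}\mathbb{Z}$, a $\mathbb{Z}$-group, and $N=\mathbb{C}((t^{\Gamma}))$, whose canonical valuation $v_N$ has value group $\Gamma$ and residue field $\mathbb{C}$. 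As $(M,w)$ has equal characteristic zero, it embeds, compatibly with $w$, into the Hahn series field $\mathbb{C}((t^{\Gamma_w}))$ (Kaplansky); composing with the embedding $\mathbb{C}((t^{\Gamma_w}))\hookrightarrow N$ induced by the order-preserving group embedding $\iota\colon\Gamma_w\hookrightarrow\Gamma$, $\iota(k,q)=(k,q,k)$, yields a valued embedding $f\colon M\hookrightarrow N$.

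\emph{Verification.} Both $M$ and $N$ are henselian with residue field $\mathbb{C}$ (algebraically closed, characteristic $0$) and value group a $\mathbb{Z}$-group, so by Ax--Kochen--Ershov both satisfy $\mathrm{Th}(\mathbb{C}((t)))$, with $v_M,v_N$ their canonical henselian valuations. The embedding $\iota$ is pure: for all $n$, $(k,q)\in n\Gamma_w\iff n\mid k\iff(k,q,k)\in n\Gamma$, since $n\Gamma=\mathbb{Q}\oplus_{\mathrm{lex}}\mathbb{Q}\oplus_{\mathrm{lex}}n\mathbb{Z}$ only constrains the last coordinate. Hence for $x\in M^{\times}$, $f(x)\in(N^{\times})^{n}\iff v_N(f(x))=\iota(w(x))\in n\Gamma\iff w(x)\in n\Gamma_w\iff x\in(M^{\times})^{n}$, so $f$ both preserves and reflects each $P_n$, i.e.\ $f$ is an $\mathcal{L}_{\mathrm{Mac}}$-embedding. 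Also $v_M(a)=0$, so $a\in\mathcal{O}_{v_M}$, while $v_N(f(a))=\iota(w(a))=(0,-1,0)<0$, so $f(a)\notin\mathcal{O}_{v_N}$. It remains to check that $\chi$ defines $\mathcal{O}_{v_M}$ in $M$ and $\chi(N)\subseteq\mathcal{O}_{v_N}$. The first is immediate from Theorem~\ref{thm:Z}, since $v_M$ has value group $\mathbb{Z}$. For the second, the first-order statements ``$\chi$ defines a henselian valuation ring'' and ``its residue field has no separable irreducible polynomial of degree $d$'' (for every $d\ge 2$) hold in $\mathbb{C}((t))$, hence in $N$; thus $\chi(N)$ is the valuation ring of a henselian valuation of $N$ with separably closed residue field, so it is contained in the valuation ring of the canonical henselian valuation of $N$ (the coarsest such), which for the Hahn field $N$ is $v_N$ itself, as every proper coarsening of $v_N$ has non-separably-closed residue field. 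This yields the contradiction.

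I expect the crux to be the construction: one must find, inside a field elementarily equivalent to $\mathbb{C}((t))$, a valuation properly refining the canonical henselian valuation yet still computing the Macintyre predicates, and then carry out the group-theoretic book-keeping --- purity of $\iota$, and its realization by a valued extension whose value group is again a $\mathbb{Z}$-group --- needed to turn this into an $\mathcal{L}_{\mathrm{Mac}}$-embedding of models of $\mathrm{Th}(\mathbb{C}((t)))$. The uniform $\emptyset$-definability, across that theory, of the canonical henselian valuation is routine but, as sketched above, must be made explicit.
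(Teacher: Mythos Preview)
Your proof is correct and rests on the same construction as the paper: the pure order-embedding $(k,q)\mapsto(k,q,k)$ of $\mathbb{Z}\oplus_{\rm lex}\mathbb{Q}$ into $\mathbb{Q}\oplus_{\rm lex}\mathbb{Q}\oplus_{\rm lex}\mathbb{Z}$, realized at the level of Hahn series fields over $\mathbb{C}$, producing an $\mathcal{L}_{\rm Mac}$-embedding of models of $\mathrm{Th}(\mathbb{C}((t)))$ that does not respect the valuation ring. The paper's packaging differs only cosmetically---it starts directly from the Hahn field $K_1=\mathbb{C}((x^{\mathbb{Q}}))((t^{\mathbb{Z}}))$ rather than from $\mathbb{C}((u))$ equipped with a refining valuation $w$, proves that $\phi(K_1)$ is relatively algebraically closed in $K_2$ (stronger than mere preservation of the $P_n$), and obtains the contradiction via Corollary~\ref{cor:Prestel} rather than via your transfer through the formula $\chi$ of Theorem~\ref{thm:Z}.
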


Finally, we also prove a variant (again for $p$-henselian valuations) that includes assumptions both on the residue field and on the value group
(Theorem \ref{thm:A}).
It implies, in particular, that the $t$-adic valuation on $\mathbb{C}((t))$ can be defined by an $\forall$-$\emptyset$-formula in $\mathcal{L}_{\rm Mac}$, and it also implies the following:

\begin{Theorem}\label{thm:AZhat}
There is an $\forall$-$\emptyset$-formula in $\mathcal{L}_{\rm Mac}$ that defines
the valuation ring of every henselian valuation with value group $\mathbb{Z}$ and
residue field $F$ of characteristic not $2$ with absolute Galois group $G_F\cong\hat{\mathbb{Z}}$.
\end{Theorem}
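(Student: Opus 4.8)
The plan is to derive Theorem~\ref{thm:AZhat} from Theorem~\ref{thm:A}, applied with the prime $p=2$, in the same way that the $\forall$-$\emptyset$-definability of the $t$-adic valuation on $\mathbb{C}((t))$ is obtained from it. Let $v$ be a henselian valuation on a field $K$ with value group $vK\cong\mathbb{Z}$ and residue field $F=Kv$ satisfying $\mathrm{char}\,F\neq2$ and $G_F\cong\hat{\mathbb{Z}}$. Every henselian valuation is $p$-henselian for each prime $p$, so $v$ is in particular $2$-henselian; it therefore remains only to check that $v$ satisfies the hypotheses on the value group and on the residue field imposed in Theorem~\ref{thm:A} for $p=2$ and that $p=2$ is admissible there, after which Theorem~\ref{thm:A} will provide the required formula defining $\mathcal{O}_v$.

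Admissibility of $p=2$ only requires $\zeta_2=-1\in K$, which holds since $\mathrm{char}\,K\neq2$ (as $\mathrm{char}\,Kv\neq2$). The value-group hypothesis is satisfied because $vK\cong\mathbb{Z}$. The essential point is the residue-field hypothesis. Since $\mathrm{char}\,F\neq2$ we have $\mu_2\subseteq F$, and since $G_F\cong\hat{\mathbb{Z}}$ is procyclic it possesses exactly one open subgroup of index $2$; Kummer theory therefore yields
\[
 F^\times/(F^\times)^2 \cong \operatorname{Hom}(G_F,\mathbb{Z}/2\mathbb{Z}) \cong \operatorname{Hom}(\hat{\mathbb{Z}},\mathbb{Z}/2\mathbb{Z}) \cong \mathbb{Z}/2\mathbb{Z}.
\]
In particular $F$ has a unique quadratic extension, so $[F^\times:(F^\times)^2]=2$ and $F$ is not $2$-closed --- which is precisely the kind of residue-field condition required by Theorem~\ref{thm:A}; in the $\mathbb{C}((t))$ application one instead uses that the residue field $\mathbb{C}$ is $2$-closed.

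With all the hypotheses of Theorem~\ref{thm:A} verified for $p=2$, that result yields an $\forall$-$\emptyset$-formula in $\mathcal{L}_{\rm Mac}$ defining $\mathcal{O}_v$, proving Theorem~\ref{thm:AZhat}. One should still make sure that $v$ is the unique $2$-henselian valuation on $K$ to which Theorem~\ref{thm:A} applies, so that there is no ambiguity about what the formula defines: this follows because, $F$ being not $2$-closed, $K$ is not $2$-closed, hence any two $2$-henselian valuations on $K$ are comparable, while the value-group hypothesis forces such a valuation to be of rank one, and comparable rank-one valuations coincide. The step I expect to be the main obstacle is matching the residue-field hypothesis of Theorem~\ref{thm:A} exactly, together with the fact that $p=2$ behaves less smoothly than odd primes in $p$-henselian valuation theory; the remaining verifications are routine.
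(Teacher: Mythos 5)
Your proposal is correct and is essentially the paper's own proof: Theorem \ref{thm:AZhat} is deduced from Theorem \ref{thm:A} with $p=2$ and $n=1$, the key point being that ${\rm char}(F)\neq 2$ and $G_F\cong\hat{\mathbb{Z}}$ give $|F^\times/(F^\times)^2|=2$ via Kummer theory, while $\mathbb{Z}$ has no nontrivial $2$-divisible convex subgroup. The closing worry about uniqueness of the $2$-henselian valuation is superfluous, since Theorem \ref{thm:A} already asserts that the single formula defines $\mathcal{O}_v$ for every valued field $(K,v)$ in the class.
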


Combining this with Theorem \ref{thm:CDLM} (or Theorem \ref{thm:E}), we summarize:

\begin{Corollary}
There are $\exists$-$\emptyset$ and $\forall$-$\emptyset$-formulas in $\mathcal{L}_{\rm Mac}$
that define $\mathbb{Z}_p$ in $\mathbb{Q}_p$ and $\mathbb{F}_p[[t]]$ in $\mathbb{F}_p((t))$ for every odd prime $p$,
although there are no such $\exists$-$\emptyset$ or $\forall$-$\emptyset$-formulas in $\mathcal{L}_{\rm ring}$.
\end{Corollary}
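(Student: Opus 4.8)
The plan is to read the two positive assertions off the theorems quoted above and to reduce the two negative assertions to the classical non-uniformity result for $\mathbb{Q}_p$. For the positive part, recall that for every odd prime $p$ the field $\mathbb{Q}_p$ carries the $p$-adic henselian valuation with valuation ring $\mathbb{Z}_p$, residue field $\mathbb{F}_p$ and value group $\mathbb{Z}$, and likewise $\mathbb{F}_p((t))$ carries the $t$-adic henselian valuation with valuation ring $\mathbb{F}_p[[t]]$, residue field $\mathbb{F}_p$ and value group $\mathbb{Z}$. Since $p$ is odd, $\mathbb{F}_p$ is a finite field of characteristic $\neq 2$, so Theorem~\ref{thm:CDLM} (or Theorem~\ref{thm:E}) provides one $\exists$-$\emptyset$-formula in $\mathcal{L}_{\rm Mac}$ that defines $\mathbb{Z}_p$ in $\mathbb{Q}_p$ and $\mathbb{F}_p[[t]]$ in $\mathbb{F}_p((t))$ for all odd $p$; and since in addition $G_{\mathbb{F}_p}\cong\hat{\mathbb{Z}}$ and the value group is $\mathbb{Z}$, Theorem~\ref{thm:AZhat} provides one $\forall$-$\emptyset$-formula in $\mathcal{L}_{\rm Mac}$ doing the same. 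This settles the positive half of the corollary.

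For the negative half it is enough to rule out uniform $\exists$-$\emptyset$- and $\forall$-$\emptyset$-formulas of $\mathcal{L}_{\rm ring}$ defining $\mathbb{Z}_p$ in $\mathbb{Q}_p$, since any formula as in the statement must in particular do this for all odd $p$. That there is no uniform $\exists$-$\emptyset$-formula in $\mathcal{L}_{\rm ring}$ is the classical result recalled in the introduction (cf.~\cite{Cluckersetal}): passing to an ultraproduct $\prod_p\mathbb{Q}_p/\mathcal{U}$ over a non-principal ultrafilter on the odd primes, one obtains a henselian valuation with non-archimedean value group, whose valuation ring therefore admits no existential $\emptyset$-definition (such a definition would essentially require a $\emptyset$-definable element of value $1$, which does not exist). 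For the $\forall$-$\emptyset$ case, suppose $\varphi$ were a universal $\mathcal{L}_{\rm ring}$-$\emptyset$-formula with $\varphi(\mathbb{Q}_p)=\mathbb{Z}_p$ for all odd $p$; then $\neg\varphi$ is an existential $\mathcal{L}_{\rm ring}$-$\emptyset$-formula defining $\mathbb{Q}_p\setminus\mathbb{Z}_p$ for all odd $p$, and, since $v_p(x)<0\iff v_p(x^{-1})\geq 1\iff x^{-1}\in p\mathbb{Z}_p$ for $x\neq 0$, the existential $\mathcal{L}_{\rm ring}$-$\emptyset$-formula $(x=0)\vee\exists y\,(xy=1\wedge\neg\varphi(y))$ then defines the maximal ideal $p\mathbb{Z}_p$ uniformly for all odd $p$. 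The same ultraproduct argument, however, applies to $\mathbb{Q}_p\setminus\mathbb{Z}_p$ and to $p\mathbb{Z}_p$ just as it does to $\mathbb{Z}_p$ — each of these sets determines the valuation ring and is existentially $\emptyset$-definable only at the cost of a $\emptyset$-definable element of positive value — so no such $\varphi$ exists.

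I expect the main obstacle to be exactly this last point: verifying that the ultraproduct obstruction to a uniform existential $\emptyset$-definition of $\mathbb{Z}_p$ in $\mathbb{Q}_p$ transfers to $p\mathbb{Z}_p$ (equivalently to $\mathbb{Q}_p\setminus\mathbb{Z}_p$), which is what converts a hypothetical uniform $\forall$-$\emptyset$-definition of $\mathbb{Z}_p$ into a contradiction. The remaining steps — identifying the residue fields and value groups of the fields involved, recalling $G_{\mathbb{F}_p}\cong\hat{\mathbb{Z}}$, and plugging into Theorem~\ref{thm:CDLM} (or~\ref{thm:E}) and Theorem~\ref{thm:AZhat} — are routine.
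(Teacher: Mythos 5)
Your positive half is exactly the paper's argument: the $p$-adic valuation on $\mathbb{Q}_p$ and the $t$-adic valuation on $\mathbb{F}_p((t))$ are henselian with finite residue field $\mathbb{F}_p$ of characteristic $\neq 2$, $G_{\mathbb{F}_p}\cong\hat{\mathbb{Z}}$ and value group $\mathbb{Z}$, so Theorem \ref{thm:CDLM} (or \ref{thm:E}) gives the uniform $\exists$-$\emptyset$-formula and Theorem \ref{thm:AZhat} the uniform $\forall$-$\emptyset$-formula in $\mathcal{L}_{\rm Mac}$. That part is fine and is what the paper means by ``combining''.

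The negative half is where there is a genuine gap. The paper does not reprove it: it simply invokes the known non-uniformity results (cf.\ the introduction and \cite{Cluckersetal}). Your attempted justification does not stand on its own. For the existential case, the parenthetical reason --- that a non-archimedean value group in the ultraproduct forces any $\exists$-$\emptyset$-definition to ``require a $\emptyset$-definable element of value $1$'' --- is not an argument; note that the valuation ring of that very ultraproduct \emph{is} $\exists$-$\emptyset$-definable in $\mathcal{L}_{\rm Mac}$ by Theorem \ref{thm:CDLM}, so whatever obstruction rules out an $\mathcal{L}_{\rm ring}$-definition must be finer than ``non-archimedean value group'', and the actual known proof (via \cite{Cluckersetal}) is of a different nature; also the constructions of Section 4 of this paper do not apply here, since a pseudo-finite residue field $F$ does not satisfy $F\equiv F((\mathbb{Q}))$. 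For the universal case, your reduction is correct as far as it goes: a uniform $\forall$-$\emptyset$-definition of $\mathbb{Z}_p$ yields, via $(x=0)\vee\exists y\,(xy=1\wedge\neg\varphi(y))$, a uniform $\exists$-$\emptyset$-definition of $p\mathbb{Z}_p$. But this is \emph{not} the statement you quoted as known (which concerns $\mathbb{Z}_p$, not its maximal ideal), and there is no formal way to pass from an $\exists$-definition of $p\mathbb{Z}_p$ back to one of $\mathbb{Z}_p$ without extra quantifiers; so the corollary's $\forall$-claim rests on the assertion that ``the same ultraproduct argument applies to $p\mathbb{Z}_p$'', which you flag yourself and never establish --- and since your version of that argument is itself not sound, the claim has no solid referent. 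The correct course here is the paper's: quote the known results ruling out uniform $\exists$-$\emptyset$- and $\forall$-$\emptyset$-definitions in $\mathcal{L}_{\rm ring}$ (see \cite{Cluckersetal}), or, if you want a self-contained proof, supply the actual argument for the maximal-ideal variant rather than asserting it transfers.
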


Since again we can eliminate the predicates $P_n$, 
we observe that Corollary \ref{cor:CDLM} holds with $\exists\forall$ replaced by $\forall\exists$,
which can be deduced also from \cite[Theorem 2]{Prestel}.

Combining our positive and negative results we acquire an almost complete understanding of the $\mathcal{L}_{\rm Mac}$-definability 
of henselian valuations on higher-dimensional local fields in the sense of Parshin and Kato.
We briefly discuss this in Section \ref{sec:higherlocalfields}.

\section{Uniform definitions in the Macintyre language}

We will make use of the following general definability principle:

\begin{Proposition}\label{prop:Prestel}
Let $\mathcal{L}$ be a language containing $\mathcal{L}_{\rm ring}$.
Let $\Sigma$ be a first order axiom system in $\mathcal{L}\cup\{\mathcal{O}\}$, where $\mathcal{O}$ is a unary predicate symbol.
Then there exists an
$\mathcal{L}$-formula $\varphi(x)$, defining uniformly
in every model $(K, \mathcal{O})$ of $\Sigma$ the set $\mathcal{O}$, 
of quantifier type
\begin{align*}
\exists & \textrm{ iff }(K_1 \leq K_2 \Rightarrow \mathcal{O}_1
\subseteq \mathcal{O}_2) \\
\forall & \textrm{ iff }(K_1 \leq K_2 \Rightarrow \mathcal{O}_2 \cap K_1
\subseteq \mathcal{O}_1) \\
\exists\forall & \textrm{ iff }(K_1 \prec_\exists K_2 \Rightarrow \mathcal{O}_1
\subseteq \mathcal{O}_2) \\
\forall\exists & \textrm{ iff }(K_1 \prec_\exists K_2 \Rightarrow \mathcal{O}_2
\cap K_1 \subseteq \mathcal{O}_1) 
\end{align*}
for all models $(K_1, \mathcal{O}_1)$, $(K_2, \mathcal{O}_2)$ of $\Sigma$.
Here, $K_1\leq K_2$ means that $K_1$ is an $\mathcal{L}$-substructure of $K_2$,
and $K_1\prec_\exists K_2$ means that $K_1$ is existentially closed in $K_2$, as $\mathcal{L}$-structures.
\end{Proposition}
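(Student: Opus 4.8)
The plan is to read the statement as four biconditionals and to prove each of them as a relativised instance of a classical preservation theorem: the \L o\'{s}--Tarski theorem for the $\exists$- and $\forall$-cases, and the Chang--\L o\'{s}--Suszko theorem for the $\exists\forall$- and $\forall\exists$-cases, all applied to the formula $\mathcal{O}(x)$ itself in the $\exists$- and $\exists\forall$-cases and to its negation $\lnot\mathcal{O}(x)$ in the $\forall$- and $\forall\exists$-cases, and relativised to the background theory $\Sigma$ in the expanded language $\mathcal{L}\cup\{\mathcal{O}\}$. Each biconditional then splits into an easy direction (a uniform definition of the given quantifier type has the corresponding preservation property) and a substantial direction (the preservation property yields a uniform definition of the corresponding type).

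For the easy direction one only recalls the preservation behaviour of the syntactic classes: an $\exists$-formula survives passage to an extension, a $\forall$-formula survives passage to a substructure, an $\exists\forall$-formula survives passage upward along an inclusion $K_1\prec_\exists K_2$ (fix the existential witnesses in $K_1$; the remaining universal matrix then survives in $K_2$ because $K_1\prec_\exists K_2$), and a $\forall\exists$-formula survives the dual passage. For instance, if $\mathcal{O}$ is defined uniformly by an $\exists$-$\mathcal{L}$-formula $\varphi$ and $K_1\leq K_2$, then $a\in\mathcal{O}_1$ gives $K_1\models\varphi(a)$, hence $K_2\models\varphi(a)$, hence $a\in\mathcal{O}_2$, so $\mathcal{O}_1\subseteq\mathcal{O}_2$; the $\forall$-case is this statement read for $\lnot\mathcal{O}$ (giving $\mathcal{O}_2\cap K_1\subseteq\mathcal{O}_1$), and the $\exists\forall$- and $\forall\exists$-cases are the same along $\prec_\exists$, for $\mathcal{O}$ respectively $\lnot\mathcal{O}$.

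The substantial direction I would carry out in full in the $\exists$-case. Adjoin a fresh constant $c$ and let $\Phi(c)$ be the set of all $\exists$-$\mathcal{L}$-formulas $\psi(c)$ with $\Sigma\cup\{\mathcal{O}(c)\}\models\psi(c)$. It suffices to prove $\Sigma\cup\Phi(c)\models\mathcal{O}(c)$: compactness then yields $\psi_1,\dots,\psi_k\in\Phi$ with $\Sigma\models\forall x\,\bigl(\bigwedge_i\psi_i(x)\to\mathcal{O}(x)\bigr)$, and since the converse holds by construction, $\varphi:=\bigwedge_i\psi_i$ is an $\exists$-$\mathcal{L}$-formula defining $\mathcal{O}$ uniformly in all models of $\Sigma$. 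So suppose $(K_2,\mathcal{O}_2,a_2)\models\Sigma\cup\Phi(c)\cup\{\lnot\mathcal{O}(c)\}$. First, the theory $\Sigma\cup\{\mathcal{O}(c)\}\cup\{\lnot\psi(c):\psi\text{ an }\exists\text{-}\mathcal{L}\text{-formula with }K_2\not\models\psi(a_2)\}$ is consistent, for otherwise compactness gives finitely many such $\psi_i$ with $\Sigma\cup\{\mathcal{O}(c)\}\models\bigvee_i\psi_i(c)$; but $\bigvee_i\psi_i$ is again an $\exists$-$\mathcal{L}$-formula, hence lies in $\Phi$, so $K_2\models\psi_i(a_2)$ for some $i$, contradicting the choice of the $\psi_i$. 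Pick a model $(K_1,\mathcal{O}_1,a_1)$; then every $\exists$-$\mathcal{L}$-formula true of $a_1$ in $K_1$ is true of $a_2$ in $K_2$. It follows that the $(\mathcal{L}\cup\{\mathcal{O}\})$-theory consisting of the elementary diagram of $(K_2,\mathcal{O}_2)$ together with the $\mathcal{L}$-diagram of $K_1$, with the name of $a_1$ identified with that of $a_2$, is finitely satisfiable: a finite part of the $\mathcal{L}$-diagram of $K_1$ reads $\bigwedge_j\delta_j(a_1,\bar m)$, so $K_1\models\exists\bar y\,\bigwedge_j\delta_j(a_1,\bar y)$, whence $K_2\models\exists\bar y\,\bigwedge_j\delta_j(a_2,\bar y)$ and a witnessing tuple interprets $\bar m$ inside $(K_2,\mathcal{O}_2)$. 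A model of the whole theory displays an elementary extension $(K_2',\mathcal{O}_2')$ of $(K_2,\mathcal{O}_2)$ — in particular a model of $\Sigma$ with $a_2\notin\mathcal{O}_2'$ — together with an $\mathcal{L}$-embedding $K_1\hookrightarrow K_2'$ sending $a_1$ to $a_2$. Then $K_1\leq K_2'$ as $\mathcal{L}$-structures and both are models of $\Sigma$, so the preservation hypothesis gives $a_2\in\mathcal{O}_2'$, a contradiction. The $\forall$-case is this argument read verbatim for $\lnot\mathcal{O}(x)$.

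The $\exists\forall$- and $\forall\exists$-cases run along the same lines, now with $\Phi(c)$ the set of $\exists\forall$-$\mathcal{L}$-consequences of $\Sigma\cup\{\mathcal{O}(c)\}$; here one uses that $\exists\forall$-$\mathcal{L}$-formulas are closed under finite disjunctions and conjunctions (after renaming bound variables), which is exactly what the two compactness steps require. The one genuine difference — and the main obstacle of the proof — is that from the information extracted at the analogue of the step above, namely that every $\exists\forall$-$\mathcal{L}$-formula true of $a_1$ in $K_1$ is true of $a_2$ in $K_2$ (equivalently, the $\forall\exists$-$\mathcal{L}$-type of $a_2$ over $\emptyset$ in $K_2$ is contained in that of $a_1$ in $K_1$), one must produce not merely an $\mathcal{L}$-embedding of $K_1$ into an elementary extension of $(K_2,\mathcal{O}_2)$, but a model of $\Sigma$ sitting \emph{existentially closed}, as an $\mathcal{L}$-structure, inside such an extension: concretely, a model $(K_1^*,\mathcal{O}_1^*)\models\Sigma$ with $(K_1,\mathcal{O}_1)$ an elementary substructure, a model $(K_2',\mathcal{O}_2')\models\Sigma$ with $(K_2,\mathcal{O}_2)$ an elementary substructure, and an $\mathcal{L}$-embedding $K_1^*\hookrightarrow K_2'$ realising $K_1^*\prec_\exists K_2'$ and sending the image of $a_1$ to $a_2$; then $\mathcal{O}_1^*\subseteq\mathcal{O}_2'$ forces $a_2\in\mathcal{O}_2'$, contradicting $a_2\notin\mathcal{O}_2=\mathcal{O}_2'\cap K_2$. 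This is the relativised Chang--\L o\'{s}--Suszko situation, obtained by the usual iterated sandwich construction: one builds an $\omega$-chain of $\mathcal{L}$-embeddings interleaving elementary extensions of $(K_1,\mathcal{O}_1)$ and of $(K_2,\mathcal{O}_2)$, at each stage invoking the containment of $\forall\exists$-$\mathcal{L}$-types to realise the next embedding, and passes to the union, which carries an interpretation of $\mathcal{O}$ making it an elementary extension of $(K_2,\mathcal{O}_2)$ and hence a model of $\Sigma$. Getting the bookkeeping of this chain right — keeping $\mathcal{O}$ and $\Sigma$ respected along it and ensuring that the limit inclusion is genuinely existentially closed — is the only serious work; the extraction of a single $\exists\forall$-$\mathcal{L}$-formula is then the compactness step as above, and the $\forall\exists$-case is once more the $\exists\forall$-argument read for $\lnot\mathcal{O}(x)$.
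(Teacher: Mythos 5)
Your $\exists$- and $\forall$-cases are complete and correct (the paper itself only cites \cite{Prestel} for this, where essentially this compactness/diagram argument is carried out), and you correctly identify the target configuration in the $\exists\forall$-case: from a model $(K_2,\mathcal{O}_2,a_2)$ of $\Sigma\cup\Phi(c)\cup\{\lnot\mathcal{O}(c)\}$ and a model $(K_1,\mathcal{O}_1,a_1)$ of $\Sigma\cup\{\mathcal{O}(c)\}$ whose parameter-free $\exists\forall$-type at $a_1$ is contained in that of $a_2$, one must produce models of $\Sigma$ with an $\mathcal{L}$-embedding that is existentially closed and sends $a_1$ to $a_2$. But exactly this step — the one you yourself call the only serious work — is not proved, and the mechanism you name for it would not supply it. An iterated sandwich interleaving \emph{elementary} extensions of $(K_1,\mathcal{O}_1)$ and of $(K_2,\mathcal{O}_2)$ cannot create existential closedness: if $(A_n)$ and $(B_n)$ are elementary chains and $f_n\colon A_n\to B_n$ are coherent embeddings, then for $\bar a$ in $A_0$ and quantifier-free $\delta$, $B_\omega\models\exists\bar y\,\delta(f(\bar a),\bar y)$ already implies $B_0\models\exists\bar y\,\delta(f_0(\bar a),\bar y)$, and $A_\omega\models\exists\bar y\,\delta(\bar a,\bar y)$ already implies $A_0\models\exists\bar y\,\delta(\bar a,\bar y)$; so the limit embedding is existentially closed only if $f_0$ was, i.e.\ the chain gains nothing and the whole burden sits at stage $0$, where $\exists$-transfer alone only yields an embedding. (The Chang--\L o\'s--Suszko chain is the right tool for showing that an inductive theory is preserved under unions; it is not what produces a $\prec_\exists$ inclusion between two prescribed structures.) Moreover your available hypothesis is containment of the $\exists\forall$-type of the single element $a_1$, with no parameters, whereas an existentially closed embedding of all of $K_1$ requires transferring universal formulas with arbitrary parameters from $K_1$; how to bridge this is never addressed.

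The fix is in fact simpler than a chain: one compactness step suffices. Consider the theory consisting of the elementary diagram of $(K_2,\mathcal{O}_2)$ together with the \emph{universal} $\mathcal{L}$-diagram of $K_1$ (all universal $\mathcal{L}$-formulas with parameters from $K_1$ true in $K_1$, with new constants for the elements of $K_1$ and the constant of $a_1$ identified with that of $a_2$). A finite subset amounts to a single universal formula $\forall\bar z\,\delta(a_1,\bar m,\bar z)$ true in $K_1$; existentially quantifying out the parameters $\bar m$ gives the $\exists\forall$-formula $\exists\bar y\,\forall\bar z\,\delta(x,\bar y,\bar z)$ true at $a_1$ in $K_1$, hence at $a_2$ in $K_2$ by your type containment, and witnesses in $K_2$ interpret the constants $\bar m$ — this is precisely where the parameter-free one-variable hypothesis is leveraged to handle parameters. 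A model of the whole theory is an elementary extension $(K_2',\mathcal{O}_2')\succcurlyeq(K_2,\mathcal{O}_2)$ (so a model of $\Sigma$ with $a_2\notin\mathcal{O}_2'$) together with an $\mathcal{L}$-embedding $g\colon K_1\to K_2'$, $g(a_1)=a_2$, which preserves all universal $\mathcal{L}$-formulas; hence $g(K_1)\prec_\exists K_2'$, and the hypothesis gives $\mathcal{O}_1\subseteq\mathcal{O}_2'$, so $a_2\in\mathcal{O}_2'$, the desired contradiction — no elementary extension of $(K_1,\mathcal{O}_1)$ and no chain are needed. (Equivalently one can embed $K_1$ into a $|K_1|^+$-saturated elementary extension of $(K_2,\mathcal{O}_2)$ by realising the $\exists\forall$-types of tuples element by element, maintaining transfer of all $\exists\forall$-formulas as the induction hypothesis.) With this step inserted, your compactness extraction of a single $\exists\forall$-formula and the reduction of the $\forall\exists$-case to the complement of $\mathcal{O}$ go through as you describe.
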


\begin{proof}
The detailed proof given in \cite{Prestel} for the special case $\mathcal{L}=\mathcal{L}_{\rm ring}$ 
goes through verbatim for arbitrary $\mathcal{L}\supseteq\mathcal{L}_{\rm ring}$.
\end{proof}

In particular, for the Macintyre language this implies:

\begin{Corollary}\label{cor:Prestel}
Let $\Sigma$ be a first order theory of fields in $\mathcal{L}_{\rm ring}\cup\{\mathcal{O}\}$, where $\mathcal{O}$ is a unary predicate symbol,
and let $N\subseteq\mathbb{N}$.
Then there exists an
$\exists$-$\emptyset$-formula (resp.\ $\forall$-$\emptyset$-formula) $\varphi(x)$ in $\mathcal{L}_{\rm ring}\cup\{P_n:n\in N\}$, defining uniformly
in every model $(K, \mathcal{O})$ of $\Sigma$ the set $\mathcal{O}$, 
if and only if 
$\mathcal{O}_1\subseteq\mathcal{O}_2$ (resp.\ $\mathcal{O}_2\cap K_1\subseteq\mathcal{O}_1$)
for all models $(K_1,\mathcal{O}_1)$, $(K_2,\mathcal{O}_2)$ of $\Sigma$
for which $K_1$ is a subfield of $K_2$, and for all $n\in N$,
$(K_2^\times)^n\cap K_1 = (K_1^\times)^n$.
\end{Corollary}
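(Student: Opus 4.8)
The plan is to deduce this from Proposition~\ref{prop:Prestel} applied to the expanded language $\mathcal{L}:=\mathcal{L}_{\rm ring}\cup\{P_n:n\in N\}$, after two preparatory moves: fixing an axiom system in $\mathcal{L}\cup\{\mathcal{O}\}$ whose models are precisely the models of $\Sigma$ with each $P_n$ interpreted as the set of $n$-th powers, and then translating the abstract substructure conditions appearing in that proposition into the concrete condition on $n$-th powers stated here.

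First I would let $\Sigma'$ be the $\mathcal{L}\cup\{\mathcal{O}\}$-theory consisting of $\Sigma$ together with the defining axioms
\[
 \forall x\,\bigl(P_n(x)\leftrightarrow\exists y\,(y^n=x)\bigr),\qquad n\in N.
\]
Taking reducts to $\mathcal{L}_{\rm ring}\cup\{\mathcal{O}\}$ yields a bijection from the models of $\Sigma'$ onto the models of $\Sigma$, under which $P_n$ becomes the set of $n$-th powers of the field. Since $\mathcal{L}$ contains no constant symbols beyond those of $\mathcal{L}_{\rm ring}$, any $\mathcal{L}$-formula $\varphi(x)$ whose only free variable is $x$ is automatically parameter-free; hence an $\mathcal{L}$-formula of quantifier type $\exists$ (resp.\ $\forall$) defining $\mathcal{O}$ uniformly in all models of $\Sigma'$ is exactly the same thing as the desired $\exists$-$\emptyset$-formula (resp.\ $\forall$-$\emptyset$-formula) for $\Sigma$. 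So I would invoke Proposition~\ref{prop:Prestel} for $\mathcal{L}$ and $\Sigma'$; what remains is to unwind, for models $(K_1,\mathcal{O}_1),(K_2,\mathcal{O}_2)$ of $\Sigma'$, the meaning of ``$K_1\leq K_2$'' as $\mathcal{L}$-structures.

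This unwinding is the key step. Both $K_i$ are fields, since $\Sigma$ is a theory of fields. If $K_1$ is an $\mathcal{L}$-substructure of $K_2$, then it is in particular an $\mathcal{L}_{\rm ring}$-substructure, hence a subring of $K_2$; a subring of a field that happens to be a field is a subfield, so $K_1$ is a subfield of $K_2$. The substructure condition for $P_n$ reads $P_n^{K_1}=P_n^{K_2}\cap K_1$, which by the defining axioms says $\{b^n:b\in K_1\}=\{b^n:b\in K_2\}\cap K_1$; restricting to nonzero elements, and noting that $0$ lies in both sides in any case, this is precisely $(K_2^\times)^n\cap K_1=(K_1^\times)^n$. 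Conversely, if $K_1$ is a subfield of $K_2$ with $(K_2^\times)^n\cap K_1=(K_1^\times)^n$ for all $n\in N$, then $K_1$ is an $\mathcal{L}_{\rm ring}$-substructure of $K_2$ satisfying $P_n^{K_1}=P_n^{K_2}\cap K_1$ for each $n\in N$, i.e.\ $K_1\leq K_2$ as $\mathcal{L}$-structures. Substituting this equivalence into the first two lines of the table in Proposition~\ref{prop:Prestel} gives the corollary.

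The argument is essentially bookkeeping, and I anticipate no genuine obstacle. The only points requiring any care are the introduction of the defining axioms for the $P_n$, so that Proposition~\ref{prop:Prestel} can be applied in its purely syntactic form, and the remark that, although an $\mathcal{L}_{\rm ring}$-substructure of a field need not be a subfield in general, it is one here because $\Sigma$ forces $K_1$ to be a field; the treatment of $0$ in the power predicates is trivial.
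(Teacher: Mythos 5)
Your proposal is correct and coincides with the paper's (implicit) argument: the corollary is stated as an immediate consequence of Proposition~\ref{prop:Prestel}, obtained exactly by passing to the language $\mathcal{L}_{\rm ring}\cup\{P_n:n\in N\}$ with the defining axioms for the $P_n$ and observing that $\mathcal{L}$-substructure then means subfield plus $(K_2^\times)^n\cap K_1=(K_1^\times)^n$ for $n\in N$. Your handling of the two minor points (subrings of fields that are fields are subfields, and the element $0$ in the power predicates) is exactly the right bookkeeping.
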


Note that the condition $(K_2^\times)^n\cap K_1 = (K_1^\times)^n$ is satisfied in particular
when $K_1$ is relatively algebraically closed in $K_2$.

We fix some notation and recall a few definitions:

\begin{Definition}
Let $K$ be a field and $v$ a (Krull) valuation on $K$.
We denote by $\mathcal{O}_v$ the valuation ring of $v$,
by $\mathfrak{m}_v$ its maximal ideal, by $\bar{K}_v$ the residue field, and by $\Gamma_v=v(K^\times)$ the 
(additively written) value group of $v$.
The valuation $v$ is {\bf henselian} if it has a unique extension to an algebraic closure $K^{\rm alg}$ of $K$,
and {\bf $p$-henselian}, for $p$ a prime number, if it has a unique extension to 
the maximal Galois pro-$p$ extension $K(p)$ of $K$, cf.~\cite[\S4.1]{EP}.
We denote by $\zeta_p$ a primitive $p$-th root of unity.
\end{Definition}

\begin{Lemma}\label{lem:relpclosed}
Let $(K_1,v_1)\leq(K_2,v_2)$ be an extension of valued fields with ${\rm char}((\bar{K_1})_{v_1})\neq p$, $\zeta_p\in K_1$ and $(K_2^\times)^p\cap K_1=(K_1^\times)^p$. If $v_2$ is $p$-henselian, then so is $v_1$.
\end{Lemma}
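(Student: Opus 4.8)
The plan is to reduce the statement to the standard algebraic description of $p$-henselianity that is available under the present hypotheses: for a valued field $(K,v)$ with $\zeta_p\in K$ and ${\rm char}(\bar{K}_v)\neq p$, the valuation $v$ is $p$-henselian if and only if $1+\mathfrak{m}_v\subseteq(K^\times)^p$ (see \cite[\S 4.2]{EP}). The ``only if'' direction is an application of the Hensel lemma available for $p$-henselian valuations to the polynomials $f(X)=X^p-u$ with $u\in 1+\mathfrak{m}_v$: its reduction $\bar{f}=X^p-1$ has $1$ as a simple root because ${\rm char}(\bar{K}_v)\neq p$, and the splitting field $K(\sqrt[p]{u})$ of $f$ lies in $K(p)$ because $\zeta_p\in K$, so $f$ acquires a root in $\mathcal{O}_v$ and hence $u\in(K^\times)^p$. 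The ``if'' direction is the substantial one, and it is where the reference carries the load.

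Granting this characterization, the lemma follows by a short computation. First I would note the elementary consequences of the extension $(K_1,v_1)\leq(K_2,v_2)$: one has $\mathfrak{m}_{v_1}=\mathfrak{m}_{v_2}\cap K_1\subseteq\mathfrak{m}_{v_2}$, the residue field embedding $\bar{K}_{v_1}\hookrightarrow\bar{K}_{v_2}$ gives ${\rm char}(\bar{K}_{v_2})={\rm char}(\bar{K}_{v_1})\neq p$, and $\zeta_p\in K_1\subseteq K_2$. Hence the characterization applies to $v_2$ as well, and from the $p$-henselianity of $v_2$ we get $1+\mathfrak{m}_{v_2}\subseteq(K_2^\times)^p$.

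It remains to push this inclusion down to $K_1$. Given $u\in 1+\mathfrak{m}_{v_1}$, we have $u\in 1+\mathfrak{m}_{v_2}\subseteq(K_2^\times)^p$ and $u\in K_1^\times$, so $u\in(K_2^\times)^p\cap K_1=(K_1^\times)^p$ by the hypothesis on $p$-th powers. Thus $1+\mathfrak{m}_{v_1}\subseteq(K_1^\times)^p$, and applying the characterization in the other direction --- using $\zeta_p\in K_1$ and ${\rm char}(\bar{K}_{v_1})\neq p$ --- we conclude that $v_1$ is $p$-henselian.

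The only real obstacle is the algebraic characterization of $p$-henselianity used at the start; with it in hand, the argument is essentially a two-line diagram chase, and if one prefers a self-contained treatment the one point that requires genuine work is the ``if'' direction of that characterization (unique prolongation of $v$ to all of $K(p)$, not merely to the individual degree-$p$ Kummer extensions of $K$).
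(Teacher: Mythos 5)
Your proof is correct and follows essentially the same route as the paper: both reduce the statement to the characterization that, under the hypotheses $\zeta_p\in K$ and ${\rm char}(\bar{K}_v)\neq p$, $p$-henselianity is equivalent to $1+\mathfrak{m}_v\subseteq(K^\times)^p$ (citing \cite[Corollary 4.2.4]{EP}), and then push the inclusion $1+\mathfrak{m}_{v_2}\subseteq(K_2^\times)^p$ down to $K_1$ via $(K_2^\times)^p\cap K_1=(K_1^\times)^p$. The extra remarks you add (transfer of the hypotheses to $v_2$, a sketch of the easy direction of the characterization) are fine but do not change the argument.
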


\begin{proof}
Under the assumptions, $v_i$ is $p$-henselian if and only if $1+\mathfrak{m}_{v_i}\subseteq(K_i^\times)^p$, cf.~\cite[Corollary 4.2.4]{EP}.
So if $v_2$ is $p$-henselian, then $1+\mathfrak{m}_{v_2}\subseteq (K_2^\times)^p$,
hence $1+\mathfrak{m}_{v_1}=(1+\mathfrak{m}_{v_2})\cap K_1\subseteq(K_2^\times)^p\cap K_1=(K_1^\times)^p$,
which implies that $v_1$ is $p$-henselian.
\end{proof}

Trivially, every henselian valuation is $p$-henselian for every $p$.
The following two propositions generalize well-known results for henselian fields.
Alternative proofs recently appeared in \cite{JK14}:

\begin{Proposition}\label{prop:hilbertian}
If $v$ is a non-trivial $p$-henselian valuation on a field $F$ with ${\rm char}(\bar{F}_v)\neq p$ and $\zeta_p\in F$,
then $F$ is not Hilbertian\footnote{For the definition of a Hilbertian field, see \cite[Ch.~12]{FriedJarden}.}.
\end{Proposition}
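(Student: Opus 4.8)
The plan is to argue by contradiction: assume $F$ is Hilbertian and carries a non-trivial $p$-henselian valuation $v$ with ${\rm char}(\bar F_v)\neq p$ and $\zeta_p\in F$, and derive a contradiction from the fact that Hilbertianity forces the existence of many distinct degree-$p$ (or $p$-power) extensions, while $p$-henselianity severely constrains them. Concretely, since $\zeta_p\in F$ and ${\rm char}(\bar F_v)\neq p$, Kummer theory identifies $F^\times/(F^\times)^p$ with the degree-$p$ Galois extensions of $F$ (together with the trivial one), and by Lemma \ref{lem:relpclosed}'s underlying criterion (\cite[Corollary 4.2.4]{EP}) $p$-henselianity of $v$ is equivalent to $1+\mathfrak m_v\subseteq(F^\times)^p$. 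The key consequence to extract is that the Kummer radical of $F(p)/F$, i.e.\ the group $F^\times/(F^\times)^p$, is ``controlled'' by $v$: roughly, every class is represented by an element of the form $u\pi^k$ with $u$ a unit and $\pi$ a fixed element of positive value, and the unit part is governed by $\bar F_v^\times/(\bar F_v^\times)^p$.

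The main step is then to exhibit a specific polynomial that violates the Hilbert irreducibility property. A clean way: since $v$ is non-trivial, pick $a\in\mathfrak m_v\setminus\{0\}$; I would consider the family of polynomials $f_t(X)=X^p-X-t$ in characteristic $0$ (Artin--Schreier-type, using $\zeta_p$) — but more robustly, work multiplicatively. For a parameter $t$ ranging over $F$, examine whether $X^p-(1+at)$ has a root in $F$. By $p$-henselianity, whenever $v(at)>0$, i.e.\ for all $t\in\mathcal O_v$, the element $1+at\in 1+\mathfrak m_v\subseteq(F^\times)^p$, so $X^p-(1+at)$ is reducible (has a root) for every such $t$. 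One then needs that $X^p-(1+aX)$, viewed as a polynomial in $F(X)[Y]$ (i.e.\ $Y^p-(1+aX)\in F(X)[Y]$), is \emph{irreducible} over $F(X)$: this holds because $1+aX$ is not a $p$-th power in $F(X)$ (it is a non-constant separable polynomial of degree $1<p$ in $X$, hence not a $p$-th power, and by Kummer theory $Y^p-(1+aX)$ is then irreducible over $F(X)$ since $\zeta_p\in F\subseteq F(X)$). Now Hilbertianity of $F$ provides infinitely many $t\in F$ — in particular infinitely many $t\in\mathcal O_v$, since $\mathcal O_v$ is infinite and the Hilbert set can be intersected with it (a Hilbertian field's Hilbert sets meet every infinite $v$-adically/affine-definable subset suitably; more simply, one specializes $X\mapsto t$ with $t$ ranging over the infinite set $\mathcal O_v$ and the Hilbert set is cofinite-like, so it meets $\mathcal O_v$) — for which $Y^p-(1+at)$ \emph{remains irreducible} over $F$. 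This contradicts the previous paragraph.

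The hard part will be handling the interaction between the Hilbert set and the valuation ring $\mathcal O_v$: a priori a Hilbert subset of $F$ need not meet $\mathcal O_v$. I expect this is resolved using the standard fact that in a Hilbertian field one may specialize the variable within any infinite subset that is itself ``large'' — more precisely, one should replace the single parameter $t$ by $t=a s$ with $s$ ranging over all of $F$, so that $1+at = 1+a^2 s$ and the condition ``$v(a^2s)>0$'' now holds for \emph{all} $s\in\mathcal O_v$, while if instead $v(s)$ is very negative the element $1+a^2s$ has negative value and is certainly not a unit, let alone a $p$-th power forced by henselianity — but here the cleanest route is: the set $\{s\in F : Y^p-(1+a^2 s)\text{ irreducible over }F\}$ is, by Hilbertianity, infinite; but by $p$-henselianity it is \emph{disjoint} from $\mathcal O_v$ (for $s\in\mathcal O_v$, $1+a^2s\in 1+\mathfrak m_v\subseteq (F^\times)^p$), so it is contained in $F\setminus\mathcal O_v$; yet a Hilbert set cannot be contained in the complement of a valuation ring of a non-trivial valuation, because by work of e.g.\ Geyer, or directly by the fact that $\mathcal O_v$ contains cofinitely many of any ``generic'' specialization sequence — this last point is the genuine obstacle, and I would address it by invoking the characterization that a valuation ring $\mathcal O_v$ of a Hilbertian field cannot be ``Hilbert-set-avoiding'', equivalently via \cite[Corollary 4.2.4]{EP} and the observation that the non-trivial $p$-henselian $v$ already gives $F^\times/(F^\times)^p$ finite or at least the reduction map surjective, contradicting the infinitely many independent Kummer extensions a Hilbertian field must possess. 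Alternatively, and perhaps most cleanly, one cites that Hilbertian fields are not henselian and adapts this to the $p$-henselian setting, which is exactly what \cite{JK14} do.
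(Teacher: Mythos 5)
Your first two paragraphs set up the right contradiction (Hilbertianity should produce a specialization $t$ for which $Y^p-(1+at)$ stays irreducible, while $p$-henselianity forces a root whenever $1+at\in 1+\mathfrak m_v\subseteq(F^\times)^p$), but the argument stalls exactly where you say it does: nothing guarantees that the Hilbert set meets the set of $t$ with $v(at)>0$, and none of your proposed fixes closes this gap. Substituting $t=as$ only relocates the problem, since the Hilbert set could a priori consist entirely of $s$ with $v(a^2s)\le 0$, where $p$-henselianity gives no conclusion ($1+a^2s$ of non-positive value may or may not be a $p$-th power). The suggested fallback that a non-trivial $p$-henselian valuation makes $F^\times/(F^\times)^p$ finite, or the reduction map control its Kummer theory strongly enough to contradict Hilbertianity directly, is false in general (e.g.\ $k((t))$ is henselian with $F^\times/(F^\times)^p$ infinite whenever $k^\times/(k^\times)^p$ is). And ``cite that Hilbertian fields are not henselian and adapt to the $p$-henselian setting'' is circular: that adaptation is precisely the statement to be proved.

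The paper closes the gap with a two-polynomial trick, following the proof of \cite[Lemma 15.5.4]{FriedJarden}: choose $a$ with $v(a)>0$ and consider $f(T,X)=X^p+aT-1$ and $g(T,X)=X^p+T^{-1}-1$, both irreducible. Hilbertianity yields a \emph{single} $t$ for which both $f(t,X)$ and $g(t,X)$ have no root in $F$. But for every such $t$, either $v(at)>0$ or $v(t^{-1})\ge v(a)>0$, so at least one of the two specialized polynomials lies in $\mathcal O_v[X]$ and reduces to a polynomial with the simple root $1$ (using ${\rm char}(\bar F_v)\neq p$); since $\zeta_p\in F$, both polynomials split over $F(p)$, so $p$-henselianity (via \cite[Theorem 4.2.3]{EP}) forces a root in $F$ --- a contradiction. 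Covering both valuation regimes ($v(t)$ not too negative versus $v(t)$ very negative) by the pair $f,g$ is exactly the missing idea; with it, the interaction between Hilbert sets and $\mathcal O_v$ that worried you never needs to be analyzed.
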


\begin{proof}
The proof of \cite[Lemma 15.5.4]{FriedJarden} for henselian fields goes through in the $p$-henselian setting:
Choose $a\in F$ with $v(a)>0$ and let $f(T,X)=X^p+aT-1$, $g(T,X)=X^p+T^{-1}-1$.
If $F$ is Hilbertian, then, since $f$ and $g$ are irreducible, there exists $t\in F$ such that
$f(t,X)$, $g(t,X)$ have no zero in $F$. However, both polynomials split over $F(p)$, 
and at least one of them is in $\mathcal{O}_v[X]$ and has a simple zero in the residue field,
hence has a zero in $F$, \cite[Theorem 4.2.3]{EP}.
\end{proof}

\begin{Proposition}\label{prop:PAC}
If $F$ is PAC\footnote{For the definition of a pseudo-algebraically closed (PAC) field, see \cite[Ch.~11]{FriedJarden}.} and $v$ is a
non-trivial $p$-henselian valuation on $F$, then $F(p)=F$.
\end{Proposition}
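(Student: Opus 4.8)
The plan is to argue by contraposition: supposing that $v$ is a nontrivial $p$-henselian valuation on $F$ with $F(p)\neq F$, I would produce an absolutely irreducible variety over $F$ with no $F$-rational point, contradicting that $F$ is PAC. The variety will be a plane curve built from $v$ together with a witness of $F(p)\neq F$. When $\operatorname{char}(\bar{F}_v)\neq p$ one may first pass to $F(\zeta_p)$ — which is still PAC (algebraic extensions of PAC fields are PAC), still carries a nontrivial $p$-henselian valuation, and still satisfies $F(\zeta_p)(p)\neq F(\zeta_p)$, the latter two because $[F(\zeta_p):F]$ is prime to $p$ — and then Kummer theory yields $b\in F^\times\setminus(F^\times)^p$. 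In characteristic $p$ one instead uses Artin--Schreier theory to get $a\in F\setminus\wp(F)$. The relevant curves are then the diagonal curve $X^p+aY^p=bZ^p$, respectively an Artin--Schreier curve $Y^p-Y=f(X)$ with $\deg f$ prime to $p$; both are smooth and geometrically irreducible in the respective characteristics, so by PAC they would acquire $F$-points, and the whole point is that $v$ forbids this.

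The main case is where $\Gamma_v$ fails to be $p$-divisible. Here one chooses $a,b\in F^\times$ whose values represent distinct nonzero classes in $\Gamma_v/p\Gamma_v$ — possible whenever $\Gamma_v/p\Gamma_v$ is nontrivial, automatically for $p\geq 3$ and under a mild rank condition for $p=2$ — and checks directly that for all $x,y\in F$ the elements $v(x^p)$, $v(ay^p)$ and $v(b)$ lie in pairwise distinct cosets of $p\Gamma_v$, so that the ultrametric inequality makes $x^p+ay^p=b$ impossible; hence the (affine) diagonal curve has no $F$-rational point. When $\Gamma_v/p\Gamma_v$ is trivial or too small — in particular when $\Gamma_v$ is $p$-divisible — the value group can no longer separate the monomials, and one must bring in the residue field: using $p$-henselianity in the form $1+\mathfrak{m}_v\subseteq(F^\times)^p$ (valid in this range, cf.\ \cite[Corollary 4.2.4]{EP}) together with Hensel lifting of simple residues, one takes $b$ to be a non-$p$th-power unit, analyses the possible reductions of a putative $F$-point of the curve, and is led to the analogous pointlessness question over $\bar{F}_v$, which is itself PAC; in the residually ``almost unramified'' subcases the curve to use may have to have genus $\geq 1$, e.g.\ a torsor under an abelian variety with good reduction, rather than a conic or a diagonal curve.

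The step I expect to be the main obstacle is exactly this ``unramified'' case: when $\Gamma_v$ is $p$-divisible, $p$-henselianity identifies $\operatorname{Gal}(F(p)/F)$ with $\operatorname{Gal}(\bar{F}_v(p)/\bar{F}_v)$, the inertia being trivial, so the entire phenomenon lives in the residue field and one must exploit that $\bar{F}_v$ is again PAC — ideally organising the whole argument as an induction on the rank of $v$, with the ramified case above as the inductive step and a carefully treated base case (trivial valuation, or rank one with $p$-divisible value group) feeding back the residue field. Simultaneously one has to handle the failure of Hensel's lemma for $p$th powers when $\operatorname{char}(\bar{F}_v)=p$, and the case $\operatorname{char}(F)=p$, where the diagonal curve degenerates to a $p$th power of a linear form and the Artin--Schreier curves take over. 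Making the interaction between the residue field, the value group, and these characteristic obstructions fit into one clean induction is, I think, the technical heart of the proof.
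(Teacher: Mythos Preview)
Your approach is quite different from the paper's and, as you yourself anticipate, the unramified case is a real gap. The paper avoids all case distinctions by invoking a single fact about PAC fields: if $F$ is PAC then $F$ is $v$-dense in every algebraic extension with respect to any valuation $v$ on $F$ (\cite[11.5.3]{FriedJarden}). Since $v$ is $p$-henselian it extends uniquely to $F(p)$; for $x\in F(p)$, $\sigma\in\mathrm{Gal}(F(p)/F)$ and any $\gamma\in\Gamma_v$, density gives $a\in F$ with $v(a-x)>\gamma$, and then $v(a-x^\sigma)=v^\sigma(a-x)>\gamma$ because $v^\sigma=v$. Hence $v(x-x^\sigma)>\gamma$ for all $\gamma$, so $x=x^\sigma$ and $F(p)=F$. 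No curves, no characteristic splits, no value-group dichotomy.

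Your ramified argument (value group not $p$-divisible, $p\geq 3$) is a correct alternative for that subcase. But the plan for the $p$-divisible case does not close. Even granting that $\bar{F}_v$ is again PAC---which you assert without justification---this pulls in the wrong direction: any absolutely irreducible reduction $\bar{C}$ then \emph{has} a $\bar{F}_v$-point, so you cannot deduce pointlessness of $C$ over $F$ from pointlessness of $\bar{C}$ over $\bar{F}_v$; and a torsor under an abelian variety with good reduction likewise acquires a residual point, since over a PAC field every torsor under a connected smooth group is trivial. Nor does ``induction on the rank of $v$'' help: there is no $p$-henselian valuation on $\bar{F}_v$ to feed into the inductive hypothesis, and the rank of $v$ need not be finite anyway. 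The density argument is precisely the missing idea that handles all of this in one stroke.
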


\begin{proof}
Since $v$ is $p$-henselian, it has a unique extension to $F(p)$, which we again denote by $v$.
Since $F$ is PAC it is $v$-dense in $F(p)$, see \cite[11.5.3]{FriedJarden}.
Thus, for $\sigma\in{\rm Gal}(F(p)|F)$ and $x\in F(p)$,
for every $\gamma\in\Gamma_v$
there exists $a\in F$ with $v(a-x)>\gamma$,
hence $v(a-x^\sigma)=v^\sigma(a-x)>\gamma$,
since $v^\sigma=v$. 
Together, this implies that $v(x-x^\sigma)>\gamma$,
and as this holds for all $\gamma$, we conclude that $x=x^\sigma$.
Thus, $F=F(p)$.
\end{proof}

\begin{Theorem}\label{thm:phens}
For every prime number $p$ there is an $\exists$-$\emptyset$-formula in $\mathcal{L}_{\rm ring}\cup\{P_p\}$ that defines
the valuation ring of every $p$-henselian valued field $(K,v)$ with $\zeta_p\in K$ and residue field $F$ with
${\rm char}(F)\neq p$ and
\begin{enumerate}
\item[(a)] $F$ is finite, or
\item[(b)] $F$ is PAC and $F(p)\neq F$, or
\item[(c)] $F$ is Hilbertian.
\end{enumerate}
\end{Theorem}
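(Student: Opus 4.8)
The plan is to apply Corollary~\ref{cor:Prestel} with $N=\{p\}$ to a suitable first-order theory $\Sigma$ in $\mathcal{L}_{\rm ring}\cup\{\mathcal{O}\}$. I would take $\Sigma$ to axiomatize those $(K,\mathcal{O})$ for which $\mathcal{O}=\mathcal{O}_v$ for a valuation $v$ on $K$ with $\zeta_p\in K$, ${\rm char}(\bar K_v)\neq p$, $v$ $p$-henselian, $\bar K_v\neq\bar K_v(p)$, and such that $\bar K_v$ admits no non-trivial $p$-henselian valuation of residue characteristic $\neq p$. Each of these is a first-order condition on $(K,\mathcal{O})$: ``$\zeta_p\in K$'' and ``${\rm char}(\bar K_v)\neq p$'' (i.e.\ $p$ a unit of $\mathcal{O}$) are immediate; ``$v$ is $p$-henselian'' is, under the first two hypotheses, equivalent to $1+\mathfrak{m}_v\subseteq(K^\times)^p$ by \cite[Corollary~4.2.4]{EP}; ``$\bar K_v\neq\bar K_v(p)$'' translates via Kummer theory into $(\bar K_v^\times)^p\neq\bar K_v^\times$; and the last condition is first-order by the rigid-element description of $p$-henselianly valuable fields (with the customary extra care when $p=2$). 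The first task is then to make this precise and, using Propositions~\ref{prop:hilbertian} and~\ref{prop:PAC} together with the observations that finite fields and Hilbertian fields of characteristic $\neq p$ (containing $\zeta_p$) admit no non-trivial $p$-henselian valuation and are not $p$-closed, to verify that every $(K,v)$ as in (a), (b), or (c)---including the case of trivial $v$, where $\bar K_v=K$---is a model of $\Sigma$.

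It then remains to check the existential criterion of Corollary~\ref{cor:Prestel}: for models $(K_1,\mathcal{O}_1)$, $(K_2,\mathcal{O}_2)$ of $\Sigma$ with $K_1$ a subfield of $K_2$ and $(K_2^\times)^p\cap K_1=(K_1^\times)^p$, one has $\mathcal{O}_1\subseteq\mathcal{O}_2$. Put $w=v_2|_{K_1}$; then $\mathcal{O}_1\subseteq\mathcal{O}_2$ is equivalent to $\mathcal{O}_{v_1}\subseteq\mathcal{O}_w$, i.e.\ to $v_1$ being finer than or equal to $w$. By Lemma~\ref{lem:relpclosed}, $w$ is $p$-henselian on $K_1$ (its residue field embeds into $\bar K_{v_2}$, hence has characteristic $\neq p$), and $v_1$ is $p$-henselian on $K_1$ by hypothesis. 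If $w$ were strictly finer than $v_1$, then $w=v_1\circ u$ for a non-trivial valuation $u$ on the residue field of $v_1$; since $w$ is $p$-henselian, so is $u$, it has residue characteristic $\neq p$, and $\zeta_p$ reduces to a primitive $p$-th root of unity in the residue field of $v_1$, so $u$ contradicts the last axiom of $\Sigma$, applied at $K_1$. If $v_1$ and $w$ were incomparable, let $w_0$ be their finest common coarsening; then $v_1$ and $w$ induce independent non-trivial $p$-henselian valuations on $E:=(\bar{K_1})_{w_0}$ (they are $p$-henselian because $w_0$ is a coarsening of the $p$-henselian valuation $v_1$, respectively $w$), and since $\zeta_p\in E$, the $p$-henselian analogue of the classical fact that a field with two independent henselian valuations is separably closed gives $E=E(p)$; as value groups are torsion-free, it follows that the residue field of $v_1$, being the residue field of the induced valuation of $v_1$ on $E$, also equals its own maximal pro-$p$ extension, contradicting the axiom $\bar K_v\neq\bar K_v(p)$ of $\Sigma$, applied at $K_1$. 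Hence $v_1$ is finer than or equal to $w$, so $\mathcal{O}_1\subseteq\mathcal{O}_2$, and Corollary~\ref{cor:Prestel} produces an $\exists$-$\emptyset$-formula in $\mathcal{L}_{\rm ring}\cup\{P_p\}$ defining $\mathcal{O}_v$ uniformly.

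I expect the main difficulty to lie in the first step, namely in establishing that ``$\bar K_v$ admits no non-trivial $p$-henselian valuation of residue characteristic $\neq p$'' is genuinely first-order in $(K,\mathcal{O})$; this should follow from the characterization of such fields in terms of $p$-rigid elements, but requires care, in particular for $p=2$ where formally real residue fields must be handled separately. The remaining ingredients---behaviour of $p$-henselianity under composition and coarsening of valuations, comparison of valuations, and independence---are standard. A variant avoiding the delicate point is to let $\Sigma$ instead require that $\mathcal{O}$ be the valuation ring of the canonical $p$-henselian valuation $v_K^p$ of $K$, with $v_K^p$ non-trivial, $\zeta_p\in K$, ${\rm char}(\bar K_{v_K^p})\neq p$, and $\bar K_{v_K^p}\neq\bar K_{v_K^p}(p)$: since $v_K^p$ is parameter-free $\mathcal{L}_{\rm ring}$-definable and is, in precisely this situation, characterized by having residue field admitting no non-trivial $p$-henselian valuation, any $v$ as in (a)--(c) equals $v_K^p$, and the comparison argument above applies unchanged.
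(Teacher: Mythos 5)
Your reduction to Corollary~\ref{cor:Prestel} and your verification of the subring criterion are exactly the paper's argument: $w=v_2|_{K_1}$ is $p$-henselian by Lemma~\ref{lem:relpclosed}, a strict refinement of $v_1$ would induce a non-trivial $p$-henselian valuation of residue characteristic $\neq p$ on $F_1$, and incomparability would force a common coarsening with $p$-closed residue field and hence $F_1=F_1(p)$ (this is precisely \cite[Proposition 3.1]{Koe95}, which you re-derive). The genuine gap lies in your choice of $\Sigma$: you turn ``$\bar K_v$ admits no non-trivial $p$-henselian valuation of residue characteristic $\neq p$'' into an axiom, so your whole construction stands or falls with the claim that this condition is first-order in $(K,\mathcal{O})$. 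You flag this yourself as the main difficulty and gesture at rigid elements, but you never carry it out, and it is not a routine verification: the available elementarity statements (via rigid elements, or via uniform definability of the canonical $p$-henselian valuation as in \cite{Koe95,JK14}) concern plain $p$-henselianity of fields containing $\zeta_p$, come with genuine exceptional cases at $p=2$ (formally real/Euclidean residue fields), and do not obviously yield the refined condition ``with residue characteristic $\neq p$'' --- which is, however, exactly the version you need, since a Hilbertian residue field of characteristic $0$ is only known (Proposition~\ref{prop:hilbertian}) to exclude $p$-henselian valuations of residue characteristic $\neq p$. As written, this step is a missing ingredient rather than a deferred routine check.

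The paper avoids the issue entirely by a different, simpler $\Sigma$: it axiomatizes the hypotheses themselves --- $p$-henselian, $\zeta_p\in K$, ${\rm char}(F)\neq p$, and $F$ finite-or-pseudo-finite, or PAC with $F(p)\neq F$, or Hilbertian --- each of which is an elementary class by standard facts, and it uses Propositions~\ref{prop:hilbertian} and \ref{prop:PAC} (and the triviality of valuations on finite fields) only inside the verification of the criterion of Corollary~\ref{cor:Prestel}, never as axioms; your comparison argument then goes through verbatim. Your fallback via the canonical $p$-henselian valuation $v_K^p$ has an additional gap: to have $v=v_K^p$ for every $(K,v)$ covered by (a)--(c) you must rule out proper $p$-henselian refinements of $v$, i.e.\ non-trivial $p$-henselian valuations on $F$ of residue characteristic $p$ as well; Proposition~\ref{prop:hilbertian} says nothing about these (only Proposition~\ref{prop:PAC} and the finite case are unconditional), so for a characteristic-$0$ Hilbertian residue field the identity $v=v_K^p$ is unproved, and the parameter-free definability of $v_K^p$ again needs $\zeta_p$ and special care at $p=2$. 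My recommendation: keep your second paragraph, but replace your $\Sigma$ by the paper's direct axiomatization of (a)--(c).
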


\begin{proof}
The valued fields as in the statement of the theorem form an elementary class 
axiomatized by some theory $\Sigma$:
The class of $p$-henselian valued fields $(K,v)$ with $\zeta_p\in K$ and residue field $F$ with
${\rm char}(F)\neq p$ can be axiomatized for example using \cite[Corollary 4.2.4]{EP}.
Moreover, the class of finite or pseudo-finite fields (which is a subclass of (a) and (b)) is elementary, as are the fields in (b) and (c).

We want to apply Corollary \ref{cor:Prestel} to $\Sigma$.
To this end, let $(K_1,v_1)$ and $(K_2,v_2)$ be such fields with $K_1$ a subfield of $K_2$ and
$(K_2^\times)^p\cap K_1=(K_1^\times)^p$.
Denote by $w$ the restriction of $v_2$ to $K_1$.
By Lemma \ref{lem:relpclosed}, $w$ is $p$-henselian.

The residue field $F_1$ of $(K_1,v_1)$ satisfies $F_1(p)\neq F_1$ in each of the cases (a)-(c):
This is obvious in case (a), holds by assumption in case (b), and is well-known in case (c), see e.g.~\cite[16.3.6]{FriedJarden}.
Thus, $v_1$ and $w$ are comparable by \cite[Proposition 3.1]{Koe95}.

If $w$ is {\em strictly finer} than $v_1$, then it induces a non-trivial $p$-henselian valuation on $F_1$,
which is a contradiction in each of the cases (a)-(c):
In case (a) because finite fields admit no non-trivial valuations at all, in case (b) by Proposition \ref{prop:PAC},
and in case (c) by Proposition \ref{prop:hilbertian}.
Therefore, $w$ is {\em coarser} than $v_1$, i.e.\ $\mathcal{O}_{v_1}\subseteq\mathcal{O}_w\subseteq\mathcal{O}_{v_2}$, as was to be shown.
\end{proof}

Since every henselian valuation is $p$-henselian for every $p$,
Theorem \ref{thm:E} now follows from the special case $p=2$.

\begin{Theorem}\label{thm:A}
Let $p$ be a prime number and $n\in\mathbb{Z}_{\geq0}$.
There is an $\forall$-$\emptyset$-formula in $\mathcal{L}_{\rm ring}\cup\{P_p\}$ that defines
the valuation ring of every $p$-henselian valued $(K,v)$ with $\zeta_p\in K$, residue field $F$ that satisfies ${\rm char}(F)\neq p$ and $|F^\times/(F^\times)^p|=p^n$, and value group that does not contain a $p$-divisible convex subgroup.
\end{Theorem}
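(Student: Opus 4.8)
The plan is to apply the $\forall$-part of Corollary~\ref{cor:Prestel} to a first-order theory $\Sigma$ axiomatizing the class in the statement, so I first need to check that this class is elementary. That $\mathcal{O}$ is a valuation ring with $\zeta_p$ in the field and residue characteristic $\neq p$, and that (given this) $v$ is $p$-henselian --- equivalently $1+\mathfrak{m}_v\subseteq(K^\times)^p$, by \cite[Corollary~4.2.4]{EP} --- are clearly expressible in $\mathcal{L}_{\rm ring}\cup\{\mathcal{O}\}$. Since $F^\times/(F^\times)^p$ is an $\mathbb{F}_p$-vector space, the condition $|F^\times/(F^\times)^p|=p^n$ merely says $\dim_{\mathbb{F}_p}F^\times/(F^\times)^p=n$, which is first-order. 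For the value-group condition I would use the equivalence: $\Gamma$ has a nontrivial $p$-divisible convex subgroup if and only if there is $\delta>0$ in $\Gamma$ with $\{\gamma\in\Gamma:0<\gamma\le\delta\}\subseteq p\Gamma$ --- the nonobvious direction follows from a short induction showing that in that case the convex hull of $\delta$ is $p$-divisible --- and the right-hand side translates into the first-order statement that there is no $a$ with $v(a)>0$ such that every $b$ with $0<v(b)\le v(a)$ satisfies $v(b)\in p\Gamma_v$.

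Now let $(K_1,v_1)$ and $(K_2,v_2)$ be models of $\Sigma$ with $K_1$ a subfield of $K_2$ and $(K_2^\times)^p\cap K_1=(K_1^\times)^p$; by Corollary~\ref{cor:Prestel} it suffices to show $\mathcal{O}_{v_2}\cap K_1\subseteq\mathcal{O}_{v_1}$. Let $w$ be the restriction of $v_2$ to $K_1$, so $\mathcal{O}_w=\mathcal{O}_{v_2}\cap K_1$, which is $p$-henselian by Lemma~\ref{lem:relpclosed}. Let $v_0$ be the finest common coarsening of $v_1$ and $w$, which is again $p$-henselian, put $L:=\bar K_{v_0}$ (so $\zeta_p\in L$, as ${\rm char}(L)\neq p$ and $\zeta_p\in K_1$), and let $\Delta_0$ be the convex subgroup of $\Gamma_{v_1}$ corresponding to $v_0$, so that the valuation $\bar v_1$ induced by $v_1$ on $L$ is $p$-henselian with value group $\Delta_0$ and residue field $F_1:=\bar K_{v_1}$. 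It is enough to prove $\Delta_0=0$, for then $v_0=v_1$ and hence $\mathcal{O}_w\subseteq\mathcal{O}_{v_1}$.

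Suppose $\Delta_0\neq 0$. As $\Delta_0$ is a nontrivial convex subgroup of $\Gamma_{v_1}$, which by assumption has no nontrivial $p$-divisible convex subgroup, $\Delta_0$ is not $p$-divisible, so $|\Delta_0/p\Delta_0|\ge p$. Since $\bar v_1$ is $p$-henselian with ${\rm char}(F_1)\neq p$ and $\zeta_p\in L$, the standard exact sequence $1\to F_1^\times/(F_1^\times)^p\to L^\times/(L^\times)^p\to\Delta_0/p\Delta_0\to 1$ gives $|L^\times/(L^\times)^p|=p^n\cdot|\Delta_0/p\Delta_0|\ge p^{n+1}$. If $v_0=w$, then $L=\bar K_w$ embeds into $F_2:=\bar K_{v_2}$, and the inclusion $L\hookrightarrow F_2$ satisfies $(F_2^\times)^p\cap L=(L^\times)^p$: if $a\in\mathcal{O}_w^\times$ has residue in $(F_2^\times)^p$, then $ab^{-p}\in 1+\mathfrak{m}_{v_2}\subseteq(K_2^\times)^p$ for a suitable $b\in\mathcal{O}_{v_2}^\times$, whence $a\in(K_2^\times)^p\cap K_1=(K_1^\times)^p$, and since $w(a)=0$ every $p$-th root of $a$ in $K_1$ lies in $\mathcal{O}_w^\times$. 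Hence $L^\times/(L^\times)^p$ embeds into $F_2^\times/(F_2^\times)^p$, so $p^{n+1}\le|L^\times/(L^\times)^p|\le|F_2^\times/(F_2^\times)^p|=p^n$, a contradiction. If $v_0\neq w$, then $v_1$ and $w$ are incomparable, so $v_0$ induces two \emph{independent} $p$-henselian valuations on $L$; by \cite[Proposition~3.1]{Koe95} this forces $L=L(p)$, hence $|L^\times/(L^\times)^p|=1$, again contradicting $|L^\times/(L^\times)^p|\ge p^{n+1}$. Therefore $\Delta_0=0$, and Corollary~\ref{cor:Prestel} produces the desired $\forall$-$\emptyset$-formula in $\mathcal{L}_{\rm ring}\cup\{P_p\}$.

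The step I expect to be the main obstacle is this final dichotomy, where the two hypotheses must pull in opposite directions: the value-group hypothesis is precisely what forces $|L^\times/(L^\times)^p|$ to exceed $p^n$, while the residue-field hypothesis together with the Macintyre condition $(K_2^\times)^p\cap K_1=(K_1^\times)^p$ is precisely what keeps it at most $p^n$. A close second is verifying the first-order axiomatizability of the value-group hypothesis, and making precise the input from \cite{Koe95} that two independent $p$-henselian valuations on a field containing $\zeta_p$ force it to be $p$-closed.
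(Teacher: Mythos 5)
Your proposal is correct and takes essentially the same route as the paper: the $\forall$-part of Corollary \ref{cor:Prestel}, Lemma \ref{lem:relpclosed} for $p$-henselianity of $w=v_2|_{K_1}$, \cite[Proposition 3.1]{Koe95} for the incomparable case, and the count of $p$-th power classes in the residue field of the common coarsening (forced above $p^n$ by the value-group hypothesis) played off against $|F_2^\times/(F_2^\times)^p|=p^n$ via $(K_2^\times)^p\cap K_1=(K_1^\times)^p$. The differences are only organizational: you phrase the case distinction through the finest common coarsening $v_0$ and a single bound $|L^\times/(L^\times)^p|\geq p^{n+1}$ (the paper handles the incomparable case directly by exhibiting a $p$-divisible convex subgroup from the $p$-closed residue field), and you make explicit the first-order axiomatization of the value-group condition, which the paper merely asserts.
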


\begin{proof}
Again, these valued fields form an elementary class axiomatized by some theory $\Sigma$, as above.
We want to apply Corollary \ref{cor:Prestel} to $\Sigma$.
Let $(K_1,v_1)$, $(K_2,v_2)$ be models of $\Sigma$ with 
$K_1\subseteq K_2$ and $(K_2^\times)^p\cap K_1=(K_1^\times)^p$,
and denote by $w$ the restriction of $v_2$ to $K_1$.
By Lemma \ref{lem:relpclosed}, $w$ is $p$-henselian.
Denote by $F_1$ and $F_2$ the residue fields of $v_1$ resp.~$v_2$.
By assumption, ${\rm dim}_{\mathbb{F}_p}(F_1^\times/(F_1^\times)^p)={\rm dim}_{\mathbb{F}_p}(F_2^\times/(F_2^\times)^p)=n$.

If $v_1$ and $w$ are incomparable then they have a common coarsening with $p$-closed residue field $F_0$, see \cite[Proposition 3.1]{Koe95}.
The convex subgroup of $\Gamma_{v_1}$ corresponding to the valuation induced by $v_1$ on $F_0$ is then $p$-divisible (as $F_0^\times=(F_0^\times)^p$),
contradicting the assumption.

If $w$ is a proper coarsening of $v_1$, then the valuation $\bar{v}_1$ 
induced by $v_1$ on the residue field $F$ of $w$
has value group a convex subgroup of $\Gamma_{v_1}$, hence not $p$-divisible.
Therefore, 
$$
 \dim_{\mathbb{F}_p}(F^\times/(F^\times)^p)\geq\dim_{\mathbb{F}_p}(\Gamma_{\bar{v}_1}/p\Gamma_{\bar{v}_1})+
 \dim_{\mathbb{F}_p}(\bar{F}_{\bar{v}_1}^\times/(\bar{F}_{\bar{v}_1}^\times)^p)
>\dim_{\mathbb{F}_p}(F_1^\times/(F_1^\times)^p)=n.
$$
Since $(K_2^\times)^p\cap K_1=(K_1^\times)^p$ and $v_2$ is $p$-henselian, also $(F_2^\times)^p\cap F=(F^\times)^p$:
Indeed, if $x\in\mathcal{O}_w^\times$ with $\bar{x}=\bar{y}^p$, $y\in \mathcal{O}_{v_2}^\times$, then, since $f(T)=T^p-x$ splits in $K_2(p)$
and $\bar{f}(T)$ has the simple zero $\bar{y}$,
there is $z\in K_2^\times$ with $z^p=x$, so $x\in (K_2^\times)^p\cap K_1=(K_1^\times)^p$, and thus $\bar{x}\in(F^\times)^p$.
Therefore, $\dim_{\mathbb{F}_p}(F_2^\times/(F_2^\times)^p)\geq\dim_{\mathbb{F}_p}(F^\times/(F^\times)^p)>n$, contradicting the assumption.
Thus, $w$ is finer than $v_1$, i.e.~$\mathcal{O}_{v_2}\cap K_1=\mathcal{O}_w\subseteq\mathcal{O}_{v_1}$, as was to be shown.
\end{proof}

For the $t$-adic valuation on $K=\mathbb{C}((t))$,
Theorem \ref{thm:A} immediately applies with $n=0$ and arbitrary $p$.
Moreover, Theorem \ref{thm:AZhat} follows from the special case $n=1$ and $p=2$ of Theorem \ref{thm:A},
since $G_F\cong\hat{\mathbb{Z}}$ implies that $|F^\times/(F^\times)^2|=2$.

We note that while every $\exists$-$\emptyset$-definition of a valuation ring with finite residue field $\mathbb{F}_q$ gives rise to an $\forall$-$\emptyset$-definition of the same ring, see \cite[Proposition 3.3]{AnscombeKoenigsmann}, it does not seem that this can be done in a uniform way, independent of $q$.

\section{Value group $\mathbb{Z}$ in the ring language}

In this section, we will prove Theorem \ref{thm:Z} in a $p$-henselian setting and for regular value groups. 

\begin{Definition}
An ordered abelian group $\Gamma$ is {\bf discrete} if it has a smallest positive element,
{\bf $p$-regular} if every quotient by a nontrivial convex subgroup is $p$-divisible,
and {\bf regular} if it is $p$-regular for every prime $p$.
It is a {\bf $\mathbb{Z}$-group} if it is discrete and regular.
\end{Definition}

An ordered abelian group $\Gamma$ is a $\mathbb{Z}$-group if and only if $\Gamma\equiv\mathbb{Z}$ as ordered groups, \cite[Theorem 4.1.3]{PrestelDelzell}.
Examples of $\mathbb{Z}$-groups are $\mathbb{Z}$ and $\mathbb{Z}\oplus\mathbb{Q}$, 
where for ordered abelian groups $\Gamma_1$, $\Gamma_2$ we denote by $\Gamma_1\oplus\Gamma_2$ the inverse lexicographic product.

For the rest of this section, we work in the following setting:

\begin{Setting}
Let $(K,v)$ be a $p$-henselian valued field
and assume that one of the following cases holds:
\begin{enumerate}
\item $\zeta_p\in K$ and ${\rm char}(\bar{K}_v)\neq p$
\item ${\rm char}(K)=p$
\item $p=2$
\end{enumerate}
We also assume that the value group $\Gamma=\Gamma_v$ is discrete
and identify its smallest nontrivial convex subgroup with $\mathbb{Z}$.
Choose an element $t\in K$ with $v(t)=1\in\mathbb{Z}\subseteq\Gamma$.
\end{Setting}

In case (1) let $f(Y)=Y^p-1$, in case (2) and (3) let $f(Y)=Y^p-Y$.
For $a\in K$ define 
$$
 R_a = \left\{ x\in K : (\exists y\in K)(f(y)=ax^p) \right\}.
$$

\begin{Lemma}\label{lem:B}
$R_a$ contains all $x\in K$ with $pv(x)>-v(a)$.
\end{Lemma}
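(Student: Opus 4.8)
The plan is to unwind the definitions and reduce everything to the $p$-henselian form of Hensel's Lemma. The hypothesis $pv(x)>-v(a)$ says exactly that $v(ax^p)=v(a)+pv(x)>0$, i.e.\ $c:=ax^p\in\mathfrak{m}_v$. So it suffices to show that for every $c\in\mathfrak{m}_v$ the equation $f(y)=c$ has a solution $y\in K$; such a $y$ then witnesses $x\in R_a$. (The cases $x=0$ and the degenerate case $a=0$ are subsumed, since then $c=0\in\mathfrak{m}_v$.)

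In case (1), where $f(Y)=Y^p-1$, solving $f(y)=c$ amounts to finding $y$ with $y^p=1+c$, and $1+c\in 1+\mathfrak{m}_v$. Since $\zeta_p\in K$ and ${\rm char}(\bar{K}_v)\neq p$, the $p$-henselianity of $v$ is equivalent to $1+\mathfrak{m}_v\subseteq(K^\times)^p$ by \cite[Corollary 4.2.4]{EP}; hence the desired $y$ exists. In cases (2) and (3), where $f(Y)=Y^p-Y$, I would instead look at $g(Y)=Y^p-Y-c\in\mathcal{O}_v[Y]$. Its reduction $\bar g(Y)=Y^p-Y$ has $\bar0$ as a simple zero, since its derivative $pY^{p-1}-1$ takes the value $-1\neq0$ there (in any characteristic). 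Moreover $g$ splits over $K(p)$: in case (2), where ${\rm char}(K)=p$, the polynomial $g$ is an Artin--Schreier polynomial, so for any root $\alpha$ the extension $K(\alpha)|K$ is Galois of degree $1$ or $p$ and already contains all roots $\alpha,\alpha+1,\dots,\alpha+p-1$, whence $g$ splits over $K(\alpha)\subseteq K(p)$; in case (3), $g$ has degree $p=2$ and therefore splits over a Galois extension of $K$ of degree at most $2$, which is contained in $K(2)$. Applying the $p$-henselian Hensel's Lemma \cite[Theorem 4.2.3]{EP} as in the proof of Proposition \ref{prop:hilbertian} yields a zero $y\in\mathcal{O}_v$ of $g$, i.e.\ $f(y)=y^p-y=c=ax^p$, so $x\in R_a$.

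The only points that need any care are the verification that $g$ splits over $K(p)$ in case (3) when ${\rm char}(K)\neq p$ — i.e.\ that every quadratic extension is a pro-$2$ extension — and checking that the residue polynomial really has a simple zero in all the characteristic configurations permitted by the Setting (in particular ${\rm char}(\bar{K}_v)=p$ in case (2)); both are immediate, so I do not expect a genuine obstacle here.
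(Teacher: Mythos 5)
Your proof is correct and follows essentially the same route as the paper: reduce $x\in R_a$ to solvability of $f(Y)=ax^p$ with $v(ax^p)>0$, observe that the reduction has a simple zero and that the splitting field lies in $K(p)$ (Kummer, Artin--Schreier, or quadratic), and apply the $p$-henselian Hensel's Lemma \cite[Theorem 4.2.3]{EP}. The only cosmetic differences are that in case (1) you invoke the criterion $1+\mathfrak{m}_v\subseteq(K^\times)^p$ instead of Hensel's Lemma directly, and in cases (2)--(3) you use the simple zero $\bar{0}$ where the paper uses $\bar{1}$.
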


\begin{proof}
If $pv(x)>-v(a)$, then $v(ax^p)>0$, so the reduction of $f(Y)-ax^p$ has the simple zero $y=1$.
In case (1), the splitting field of $f(Y)-ax^p$ is a Kummer extension of $K$ contained in $K(p)$;
in case (2), the splitting field $f(Y)-ax^p$ is an Artin-Schreier extension of $K$ contained in $K(p)$;
in case (3), the splitting field of $f(Y)-ax^p$ is either $K$ or a quadratic extension of $K$, hence contained in $K(p)$.
Thus, in each case, the fact that $v$ is $p$-henselian implies that there exists $y\in K$ with $f(y)=ax^p$, cf.\ \cite[Theorem~4.2.3(2)]{EP}.
\end{proof}

\begin{Lemma}\label{lem:Rt}
$R_t=\mathcal{O}_v$
\end{Lemma}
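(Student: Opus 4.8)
The plan is to prove the two inclusions $R_t\subseteq\mathcal{O}_v$ and $\mathcal{O}_v\subseteq R_t$ separately. The inclusion $\mathcal{O}_v\subseteq R_t$ is almost immediate from Lemma \ref{lem:B}: if $x\in\mathcal{O}_v$ then $v(x)\geq 0$, so $pv(x)\geq 0 > -1 = -v(t)$, and hence $x\in R_t$. (One should also note $0\in R_t$ via $y=1$ in case (1) or $y=0$ in cases (2),(3), if $0$ is not already covered.)

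The substantial direction is $R_t\subseteq\mathcal{O}_v$. Suppose $x\in K$ with $x\notin\mathcal{O}_v$, i.e.\ $v(x)<0$; I must show $x\notin R_t$, that is, $f(y)=tx^p$ has no solution $y\in K$. Suppose for contradiction that $f(y)=tx^p$ for some $y\in K$. Since $v(tx^p)=1+pv(x)$ and $v(x)\leq -1$, we have $v(tx^p)=1+pv(x)\leq 1-p<0$; in particular $v(tx^p)<0$, and crucially $v(tx^p)\equiv 1\pmod p$, so $v(tx^p)$ is \emph{not} divisible by $p$ in $\Gamma$ — this is where discreteness of $\Gamma$ (so that $1\in\Gamma$ is not $p$-divisible within the copy of $\mathbb{Z}$, and more importantly $v(tx^p)$ has residue $1$ modulo $p\Gamma$... wait, one must be careful: $v(t)=1$ lies in the convex subgroup $\mathbb{Z}$, but $v(x)$ need not) enters. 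Let me instead argue directly from the shape of $f$: in case (1), $f(y)=y^p-1$, so $v(tx^p)=v(y^p-1)$; since $v(tx^p)<0$ we get $v(y^p)=v(y^p-1)=v(tx^p)<0$, hence $pv(y)=1+pv(x)$, which is impossible since the left side is divisible by $p$ in $\Gamma$ but the right side is $\equiv 1\pmod{p\Gamma}$ — and here $1\notin p\Gamma$ because $1$ generates the smallest convex subgroup $\mathbb{Z}$ and $\Gamma$ is discrete. In cases (2) and (3), $f(y)=y^p-y$; from $v(tx^p)<0$ and $v(y^p-y)=v(tx^p)$ we deduce $v(y)<0$, so $v(y^p)=pv(y)<v(y)$, whence $v(y^p-y)=pv(y)=1+pv(x)$, again a contradiction modulo $p$.

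The main obstacle is the arithmetic-in-$\Gamma$ step: one needs to know that $1\in\Gamma$ is not divisible by $p$. This is exactly what discreteness buys us — the smallest positive element of a discrete ordered abelian group is not divisible by any $n>1$ — and it is the one place where the hypothesis on $\Gamma$ is used (regularity is not needed for this lemma). A secondary point requiring care is the reduction step $v(y^p-y)=\max$-or-$\min$ computations: one must verify that when $v(y)<0$ the terms $y^p$ and $y$ (resp.\ $y^p$ and $1$) have distinct valuations so that the valuation of the difference equals the minimum, which is routine. I would write out case (1) in full and remark that cases (2),(3) are analogous with $y$ in place of $1$.
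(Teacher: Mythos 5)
Your proof is correct and follows essentially the same route as the paper: $\mathcal{O}_v\subseteq R_t$ via Lemma \ref{lem:B}, and $R_t\subseteq\mathcal{O}_v$ by showing that $f(y)=tx^p$ with $v(x)<0$ would force $pv(y)=1+pv(x)$, i.e.\ $1\in p\Gamma$, which is impossible. Your explicit observation that discreteness is what guarantees $1\notin p\Gamma$ (and that $p$-regularity is not needed here) is a point the paper leaves implicit, but it is the same argument.
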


\begin{proof}
By Lemma \ref{lem:B}, $\mathcal{O}_v\subseteq R_t$.
If $x,y\in K$ satisfy $f(y)=tx^p$, then $x\in\mathcal{O}_v$.
Indeed, otherwise $v(tx^p)<0$.
In case (1), $v(1+tx^p)=v(tx^p)\equiv 1\mbox{ mod }p\Gamma$, contradicting $v(y^p)\equiv 0\mbox{ mod }p\Gamma$.
In case (2) and (3), $v(tx^p)\equiv 1\mbox{ mod }p\Gamma$, but $v(y)<0$, so $v(f(y))=v(y^p-y)=v(y^p)\equiv 0\mbox{ mod }p\Gamma$,
a contradiction.
\end{proof}

For a subset $X\subseteq K$ let $[X]^n$ denote the set $\{x_1\cdots x_n\;:\; x_1,\dots,x_n\in X\}$. Define 
$$
 A=\left\{a\in K^\times \;:\; 1\in R_a\;\mbox{ and }\; a^{-1}\notin [R_a]^{p^2}\right\}.
$$
For $\gamma\in\Gamma$ we let
$B_\gamma=\{x\in K:v(x)\geq \gamma\}$.
Thus, $B_0=\mathcal{O}_v$ and $B_1=\mathfrak{m}_v$.
Note that $B_\delta\cdot B_\gamma=B_{\delta+\gamma}$ for all $\delta,\gamma\in\Gamma$.

\begin{Lemma}\label{lem:main}
Assume that $\Gamma$ is also $p$-regular.
If $a\in A$, then $R_a\subseteq\mathcal{O}_v$.
\end{Lemma}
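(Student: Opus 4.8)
The plan is to show that if $a\in A$, i.e.\ $1\in R_a$ and $a^{-1}\notin[R_a]^{p^2}$, then necessarily $v(a)>0$, for then Lemma~\ref{lem:B} gives $R_a\subseteq R_a'$ where... — more precisely, once $v(a)>0$ we want $R_a\subseteq\mathcal{O}_v$. So the argument splits into two parts: first, pin down $v(a)$ using the two membership conditions defining $A$; second, conclude $R_a\subseteq\mathcal{O}_v$ from control on $v(a)$, mimicking the proof of Lemma~\ref{lem:Rt}.

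First I would extract information from $1\in R_a$. By definition this means $f(y)=a$ for some $y\in K$. In case (1), $a=y^p-1$; in cases (2),(3), $a=y^p-y$. I would analyse $v(a)$ via the valuation of $f(y)$: if $v(y)<0$ then $v(f(y))=pv(y)<0$, if $v(y)>0$ then in case (1) $v(f(y))=0$ while in cases (2),(3) $v(f(y))=v(y)>0$ (wait — in case (1), $f(y)=y^p-1$ with $v(y)>0$ forces $v(a)=0$; in cases (2),(3), $v(y)>0$ gives $v(a)=v(y)>0$); and if $v(y)=0$ then the residue of $f$ at $\bar y$ controls things. The upshot I expect: $1\in R_a$ forces $v(a)\geq 0$, and moreover $pv(x)>-v(a)$ is the relevant threshold from Lemma~\ref{lem:B}. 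The second condition $a^{-1}\notin[R_a]^{p^2}$ is what rules out $v(a)=0$ being ``too small'' — or rather, it forces $v(a)$ to be strictly positive but not $p^2$-times-representable as a product of things in $R_a$, which (using $p$-regularity of $\Gamma$ and the fact that $B_\gamma\subseteq R_a$ whenever $p\gamma>-v(a)$ by Lemma~\ref{lem:B}) pins $v(a)$ down to a bounded positive range — I expect $0<v(a)\leq p^2$ or similar, and crucially $v(a)\notin p^2\Gamma$ modulo the subgroup generated by values already forced into $R_a$.

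With $v(a)$ controlled, the second part runs parallel to Lemma~\ref{lem:Rt}: suppose $x\in R_a$ but $x\notin\mathcal{O}_v$, so $v(x)<0$ and $v(ax^p)=v(a)+pv(x)$. From $f(y)=ax^p$ I would derive a contradiction by a congruence-mod-$p\Gamma$ argument exactly as in Lemma~\ref{lem:Rt}: in case (1), $v(y^p)\in p\Gamma$ but $v(1+ax^p)$ equals $v(ax^p)\equiv v(a)\pmod{p\Gamma}$ when $v(ax^p)<0$, so I need $v(a)\notin p\Gamma$; in cases (2),(3), $v(f(y))=v(y^p)\in p\Gamma$ (since $v(y)<0$) while $v(ax^p)\equiv v(a)\pmod{p\Gamma}$, again needing $v(a)\notin p\Gamma$. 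So the real content is: the defining conditions of $A$ force $v(a)\notin p\Gamma$ (together with $v(a)>0$).

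The main obstacle will be the forward direction of the first part — showing that $a^{-1}\notin[R_a]^{p^2}$ genuinely forces $v(a)\notin p\Gamma$. Here is where $p$-regularity of $\Gamma$ enters: if $v(a)\in p\Gamma$, say $v(a)=p\gamma$, I would like to write $a^{-1}$ as a product of $p^2$ elements of $R_a$, using Lemma~\ref{lem:B} to populate $R_a$ with enough elements of prescribed (sufficiently large) value, together with a unit correction handled via $p$-henselianity; the subtlety is that $R_a$ is not a subgroup, so ``$[R_a]^{p^2}$'' is needed precisely to get multiplicative closure enough to realise $a^{-1}$, and one must be careful that the convex-subgroup structure ($\mathbb{Z}\subseteq\Gamma$ and $\Gamma/\mathbb{Z}$ being $p$-divisible by $p$-regularity) lets the value $-v(a)=-p\gamma$ be split as $p^2$ values each $>-v(a)/p^2\cdot p$... — I expect one writes $\gamma = p\gamma' + c$ with $\gamma'\in\Gamma$, $c\in\mathbb{Z}$ bounded, using $p$-divisibility of $\Gamma/\mathbb{Z}$, and then the finitely many leftover $\mathbb{Z}$-worth of value plus a unit is absorbed into the $p^2$ factors by brute enumeration and Lemma~\ref{lem:B}. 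Getting this bookkeeping exactly right, so that contrapositively $a\in A\Rightarrow v(a)\notin p\Gamma$ (and $v(a)>0$), is the crux; the rest is the now-routine mod-$p\Gamma$ valuation contradiction.
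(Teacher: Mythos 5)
Your Part 2 (the mod-$p\Gamma$ congruence argument patterned on Lemma \ref{lem:Rt}) would indeed work \emph{if} you knew $0<v(a)<p$ and $v(a)\notin p\Gamma$, but your Part 1 --- which you correctly identify as the crux --- rests on a claim that is false: membership in $A$ does \emph{not} force $v(a)>0$, nor $v(a)\notin p\Gamma$. Concretely, take $K=\mathbb{Q}((t))$ with the $t$-adic valuation, $p=2$ (case (3), $f(Y)=Y^2-Y$), and $a=2$. Then $1\in R_2$ (take $y=2$), and every $x\in R_2$ has $v(x)\geq 0$: for $v(x)<0$ one would need $1+8x^2=x^2(x^{-2}+8)$ to be a square, i.e.\ the unit $x^{-2}+8$ with residue $8$ to be a square, which Hensel's lemma rules out since $8\notin(\mathbb{Q}^\times)^2$. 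Moreover any unit $u\in R_2$ has residue $c=\bar u$ with $1+8c^2\in(\mathbb{Q}^\times)^2$, and every such $c$ satisfies $v_2(c)\geq 0$ (if $v_2(c)=-1$ then $v_2(8c^2)=1$ while $d^2-1\equiv 0\bmod 8$ for any $2$-adic unit $d$; if $v_2(c)\leq -2$ then $v_2(1+8c^2)$ is negative and odd). Hence no product of four elements of $R_2$ can equal $2^{-1}$ (a unit whose residue has $v_2=-1$), so $2^{-1}\notin[R_2]^{4}$ and $a=2\in A$, even though $v(a)=0\in p\Gamma$. Your proposed reduction therefore cannot be completed: the obstruction is exactly the range $v(a)\in\{0,\dots,p\}$, where Lemma \ref{lem:B} only puts $B_1$ (or $\mathcal{O}_v$) into $R_a$, and products of such elements cannot reach $a^{-1}$, so ``$v(a)\in p\Gamma\Rightarrow a^{-1}\in[R_a]^{p^2}$'' is neither provable by your method nor true.

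The paper's proof avoids any congruence mod $p\Gamma$ and instead splits on $\gamma=v(a)$. For $\gamma<0$, Lemma \ref{lem:B} applied to $x=a^{-1}$ itself gives $a^{-1}\in R_a\subseteq[R_a]^{p^2}$, contradiction. For $0\leq\gamma\leq p$ (this is where your approach breaks, and note that here no divisibility condition on $\gamma$ is used), one argues directly: if some $b\in R_a$ had $v(b)\leq-1$, then $a^{-1}\in B_{-p}\subseteq B_{(p+1)v(b)+1}=b^{p+1}\cdot B_1\subseteq[R_a]^{p+2}\subseteq[R_a]^{p^2}$ (using $B_1\subseteq R_a$ and $1\in R_a$), contradicting $a\in A$; hence $R_a\subseteq\mathcal{O}_v$. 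For $\gamma>p$, $p$-regularity (the part you only gesture at) supplies $\alpha\in\Gamma$ and $k\in\{1,\dots,p\}$ with $p\alpha=\gamma-k$, whence $B_{-\alpha}\subseteq R_a$ by Lemma \ref{lem:B}, and the elementary estimate $p^2\alpha\geq\gamma$ gives $a^{-1}\in B_{-p^2\alpha}=[B_{-\alpha}]^{p^2}\subseteq[R_a]^{p^2}$, again a contradiction. So the lemma is proved by showing the cases $\gamma<0$ and $\gamma>p$ are incompatible with $a\in A$ and handling $0\leq\gamma\leq p$ by the $b^{p+1}\cdot B_1$ trick --- not by pinning $v(a)$ into $(0,p)\setminus p\Gamma$, which, as the example above shows, is impossible.
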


\begin{proof}
Let $a\in A$.
Note that $1\in R_a$ implies that $R_a\subseteq[R_a]^{p^2}$.
We do a case distinction according to $\gamma=v(a)\in\Gamma$:

\framebox{$\gamma<0$} 
In this case, $pv(a^{-1})=-p\gamma>-\gamma$,
so Lemma \ref{lem:B} implies that $a^{-1}\in R_a\subseteq[R_a]^{p^2}$,
a contradiction.

\framebox{$\gamma=0,\dots,p$}
By Lemma \ref{lem:B}, $B_1\subseteq R_a$.
Suppose that $R_a\not\subseteq\mathcal{O}_v$,
i.e.\ there exists $b\in R_a$ with $v(b)\leq -1$.
Then 
$a^{-1}\in B_{-p}\subseteq B_{(p+1)v(b)+1}= b^{p+1}\cdot B_1\subseteq[R_a]^{p+2}\subseteq[R_a]^{p^2}$, a contradiction. 

\framebox{$\gamma>p$} 
Since $\Gamma$ is $p$-regular,
there exist $k\in\{1,\dots,p\}$ 
and $\alpha\in\Gamma$ such that $p\alpha=\gamma-k$.
Then $-p\alpha=-\gamma+k>-v(a)$,
so, by Lemma \ref{lem:B}, $B_{-\alpha}\subseteq R_a$.
Thus, $B_{-p^2\alpha}=[B_{-\alpha}]^{p^2}\subseteq[R_a]^{p^2}$.
Note that $p^2\alpha\geq \gamma$:
If $\gamma=p+1$, then $\alpha=1$, so it holds;
if $\gamma\geq p+2$, then 
$p\alpha=\gamma-k\geq\gamma-p$ implies that
$$
 p^2\alpha\geq p(\gamma-p)=\gamma+(p-1)\gamma-p^2\geq\gamma+(p-1)(p+2)-p^2=\gamma+p-2\geq \gamma.
$$ 
Thus, $a^{-1}\in B_{-p^2\alpha}\subseteq [R_a]^{p^2}$, a contradiction.
\end{proof}

\begin{Proposition}\label{prop:Z}
The $\exists_3\forall_{2p^2}$-$\emptyset$-formula $\varphi(x)$ in the language $\mathcal{L}_{\rm ring}$ given by
\begin{eqnarray*}
 (\exists a,y,y_0)(\forall y_1,\dots,y_{p^2},z_1,\dots,z_{p^2})&\big(&
  \neg(a=0)\,\wedge\, f(y)=ax^p \,\wedge\, f(y_0)=a \,\wedge \\
  &&\wedge\, \neg(az_1\cdots z_{p^2}=1 \,\wedge\, \bigwedge_{i=1}^{p^2} f(y_i)=az_i^p)\quad\big)
\end{eqnarray*}
defines $\mathcal{O}_v$ in $K$ for any $p$-henselian valued field $(K,v)$ 
with discrete $p$-regular value group
satisfying one of the three condition (1)-(3).
\end{Proposition}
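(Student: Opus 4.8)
The plan is to show directly that the set $\varphi(K)$ cut out by the formula equals $\mathcal{O}_v$, by first recognizing $\varphi(K)$ as $\bigcup_{a\in A}R_a$ and then invoking the lemmas already established. The first step is to unwind the syntax. The existential block $(\exists a,y,y_0)$ together with the conjuncts $\neg(a=0)$, $f(y)=ax^p$ and $f(y_0)=a$ asserts exactly that there is some $a\in K^\times$ with $x\in R_a$ and $1\in R_a$ (note $f(y_0)=a=a\cdot 1^p$); the universal block, negating $az_1\cdots z_{p^2}=1\wedge\bigwedge_{i=1}^{p^2}f(y_i)=az_i^p$, says precisely that $a^{-1}$ admits no factorization $a^{-1}=z_1\cdots z_{p^2}$ with every $z_i\in R_a$, i.e.\ $a^{-1}\notin[R_a]^{p^2}$. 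Hence $x$ satisfies $\varphi$ in $K$ if and only if $x\in R_a$ for some $a\in A$, that is, $\varphi(K)=\bigcup_{a\in A}R_a$. I would also record here that $\varphi$ is parameter-free, lies in $\mathcal{L}_{\rm ring}$ since $f$ has integer coefficients, and carries the stated quantifier prefix $\exists_3\forall_{2p^2}$.

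For the inclusion $\varphi(K)\subseteq\mathcal{O}_v$ I would simply apply Lemma \ref{lem:main}, which gives $R_a\subseteq\mathcal{O}_v$ for every $a\in A$; this is the one place where the hypothesis that $\Gamma$ is $p$-regular (and not merely discrete, as in the Setting) is used. For the reverse inclusion $\mathcal{O}_v\subseteq\varphi(K)$ it suffices to exhibit a single $a\in A$ with $R_a=\mathcal{O}_v$, and $a=t$ does the job: Lemma \ref{lem:Rt} gives $R_t=\mathcal{O}_v$; since $pv(1)=0>-1=-v(t)$, Lemma \ref{lem:B} yields $1\in R_t$; and $[R_t]^{p^2}=[\mathcal{O}_v]^{p^2}=\mathcal{O}_v$ because $\mathcal{O}_v$ is closed under multiplication, so $t^{-1}\notin[R_t]^{p^2}$ as $v(t^{-1})=-1<0$. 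Thus $t\in A$ and $\mathcal{O}_v=R_t\subseteq\bigcup_{a\in A}R_a=\varphi(K)$. Combining the two inclusions gives $\varphi(K)=\mathcal{O}_v$.

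I do not expect a genuine obstacle, since the content resides entirely in Lemmas \ref{lem:B}, \ref{lem:Rt} and \ref{lem:main}. The only points calling for (minor) care are the faithful translation of the logical formula into the equality $\varphi(K)=\bigcup_{a\in A}R_a$ --- in particular keeping track of which conjunct encodes ``$x\in R_a$'', which encodes ``$1\in R_a$'', and which encodes ``$a^{-1}\notin[R_a]^{p^2}$'' --- and the verification that $t\in A$, which reduces to the triviality that $\mathcal{O}_v$ is a ring, so that $[\mathcal{O}_v]^{p^2}=\mathcal{O}_v$.
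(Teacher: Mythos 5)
Your proposal is correct and follows essentially the same route as the paper's proof: identify $\varphi(K)=\bigcup_{a\in A}R_a$, apply Lemma \ref{lem:main} for the inclusion in $\mathcal{O}_v$, and verify $t\in A$ via Lemmas \ref{lem:Rt} and \ref{lem:B} to get the reverse inclusion. Your write-up is merely more explicit than the paper (e.g.\ in unwinding the syntax and checking $1\in R_t$), with no substantive difference.
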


\begin{proof}
Clearly,
$\varphi(K) = \bigcup_{a\in A} R_a$.
By Lemma \ref{lem:main}, this set is contained in $\mathcal{O}_v$.
Let $t\in K$ with $v(t)=1$. Then $R_t=\mathcal{O}_v$ (Lemma \ref{lem:Rt}), 
so we have 
$[R_t]^{p^2}=\mathcal{O}_v$, and hence $t^{-1}\notin[R_t]^{p^2}$.
Thus,
$t\in A$,
hence $\varphi(K)\supseteq R_t=\mathcal{O}_v$,
and therefore indeed $\varphi(K)=\mathcal{O}_v$.
\end{proof}

\begin{Corollary}\label{cor:2henselian}
There is an $\exists\forall$-$\emptyset$-formula in $\mathcal{L}_{\rm ring}$ that defines
the valuation ring of every $2$-henselian valuation with discrete $2$-regular value group.
\end{Corollary}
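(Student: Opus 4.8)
The plan is to deduce Corollary \ref{cor:2henselian} directly from Proposition \ref{prop:Z} by specializing to $p=2$, observing that condition (3) of the Setting is automatically satisfied for $p=2$, so no hypothesis on roots of unity or residue characteristic is needed. First I would note that if $v$ is a $2$-henselian valuation with discrete $2$-regular value group on any field $K$, then the hypotheses of the Setting are met with $p=2$: indeed case (3) ($p=2$) holds unconditionally, and ``discrete $2$-regular'' is exactly what is assumed in Proposition \ref{prop:Z} for the case $p=2$. Then Proposition \ref{prop:Z} produces a single $\exists_3\forall_{2p^2}$-$\emptyset$-formula $\varphi(x)$ in $\mathcal{L}_{\rm ring}$ — with $p=2$, this is $\exists_3\forall_8$ — defining $\mathcal{O}_v$ in every such $(K,v)$. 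Since this formula does not depend on $(K,v)$, it is uniform, and $\exists_3\forall_8$ is in particular of quantifier type $\exists\forall$.

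The only genuine point to check is that every $2$-henselian valued field with discrete $2$-regular value group really does fall under case (3) of the Setting, i.e.\ that imposing $p=2$ removes the need for the alternatives in cases (1) and (2). This is immediate from the way the Setting is phrased as a disjunction, but I would spell it out: in case (3) one takes $f(Y)=Y^p-Y=Y^2-Y$, and the proofs of Lemma \ref{lem:B}, Lemma \ref{lem:Rt} and Lemma \ref{lem:main} all handle case (3) explicitly without any extra assumption, using only that quadratic extensions lie in $K(2)$ and that $v$ is $2$-henselian. So there is no hidden obstacle; the work has all been done in the preceding lemmas and in Proposition \ref{prop:Z}.

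I therefore expect the proof to be essentially one line: apply Proposition \ref{prop:Z} with $p=2$, noting that condition (3) is vacuously available, and that the resulting formula is $\exists_3\forall_8$, hence of type $\exists\forall$. If I wanted to be slightly more careful I would also remark that Theorem \ref{thm:Z} then follows in turn, since every henselian valuation is $2$-henselian and every $\mathbb{Z}$-group (indeed every value group equal to $\mathbb{Z}$) is discrete and $2$-regular — but that deduction belongs to the discussion after the corollary rather than to its proof. The main ``obstacle'', such as it is, is purely bookkeeping: making sure the quantifier count $\exists_3\forall_{2p^2}$ with $p=2$ is correctly reported as $\exists_3\forall_8\subseteq\exists\forall$, and that the passage from ``for every $2$-henselian valued field satisfying (1)--(3)'' in Proposition \ref{prop:Z} to ``for every $2$-henselian valuation'' in the corollary is justified by case (3) alone.

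\begin{proof}
Let $v$ be a $2$-henselian valuation with discrete $2$-regular value group on a field $K$.
Then condition (3) of the Setting holds with $p=2$, and $\Gamma_v$ is discrete and $2$-regular.
By Proposition \ref{prop:Z} (applied with $p=2$), the $\exists_3\forall_{8}$-$\emptyset$-formula $\varphi(x)$ in $\mathcal{L}_{\rm ring}$ given there defines $\mathcal{O}_v$ in $K$.
Since this formula does not depend on $(K,v)$ and is of quantifier type $\exists\forall$, this proves the corollary.
\end{proof}
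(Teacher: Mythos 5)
Your proof is correct and is exactly the paper's (implicit) argument: the corollary is the special case $p=2$ of Proposition \ref{prop:Z}, where condition (3) of the Setting holds unconditionally and the formula $\exists_3\forall_8$ is of type $\exists\forall$. Nothing further is needed.
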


Since every henselian valuation is in particular $2$-henselian
and $\mathbb{Z}$ is discrete $2$-regular, this implies Theorem \ref{thm:Z}.

\begin{Corollary}
If $(K,v)$ is a henselian valued field with value group $\Gamma$ regular non-divisible,
then $\mathcal{O}_v$ is $\exists\forall$-$\emptyset$-definable in $\mathcal{L}_{\rm ring}$.
\end{Corollary}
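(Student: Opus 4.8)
The plan is to deduce this corollary from Corollary~\ref{cor:2henselian}, whose hypothesis requires the value group to be discrete and $2$-regular. So the task reduces to a purely group-theoretic reduction: given a henselian valued field $(K,v)$ with value group $\Gamma$ regular and non-divisible, I want to produce (definably, using the same $\exists\forall$-$\emptyset$-formula) the valuation ring of a coarsening of $v$ whose value group is discrete $2$-regular, and then argue that this coarsening already has valuation ring $\mathcal{O}_v$ — or, more simply, reduce to the case where $\Gamma$ itself is discrete.

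First I would observe that since $\Gamma$ is non-divisible, there is a prime $p$ with $\Gamma\neq p\Gamma$. Regularity of $\Gamma$ means every quotient by a nontrivial convex subgroup is divisible; equivalently, $\Gamma$ has a smallest nontrivial convex subgroup $\Delta$ (indeed, if there were no smallest one, $\Gamma$ would be $p$-divisible for all $p$, contradicting non-divisibility — one checks that a nontrivial intersection-closed chain of convex subgroups with no minimal member forces divisibility of $\Gamma$), and $\Gamma/\Delta$ is divisible while $\Delta$ is itself regular and non-divisible, hence by induction/minimality $\Delta$ is archimedean and non-divisible, so $\Delta$ is order-isomorphic to a subgroup of $\mathbb{R}$ that is not divisible. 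Such a $\Delta$ need not be discrete (e.g. $\mathbb{Z}[1/3]\subseteq\mathbb{R}$ is $2$-divisible but not $3$-divisible), so I cannot directly conclude $\Delta\cong\mathbb{Z}$. The key point, however, is that I only need $2$-regularity and discreteness, and I should instead pass to the coarsening $w$ of $v$ whose value group is $\Gamma/\Delta$ composed appropriately — no: the cleanest route is to note that the convex subgroup $\Delta$ of $\Gamma$ determines a valuation $\bar v$ induced on the residue field $\bar K_w$ of the coarsening $w$ corresponding to $\Delta$, with value group $\Delta$; but what I actually want is a valuation on $K$ itself whose value group is $\mathbb{Z}$.

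Here is the cleaner approach I would actually carry out. Since $\Gamma$ is regular non-divisible with smallest nontrivial convex subgroup $\Delta$, pick a prime $p$ with $\Delta\neq p\Delta$ (possible: $\Delta$ regular non-divisible). Consider the convex subgroup $p\Delta$... this is not convex in general. Instead: the quotient $\Gamma/p\Delta$ — also not obviously useful. The right move is to use that $\Gamma$ being regular and non-divisible forces, after passing to a suitable convex subgroup, a copy of $\mathbb{Z}$ as a definable quotient, but to make the field-theoretic statement work one uses the coarsening trick: let $w$ be the coarsening of $v$ with $\mathcal{O}_w$ the convex hull of... — concretely, let $\Delta'$ be the largest convex subgroup of $\Gamma$ not containing the (nonzero) element realizing non-$p$-divisibility, so that $\Gamma/\Delta'$ is discrete; then the coarsening $v_{\Delta'}$ of $v$ with value group $\Gamma/\Delta'$ is henselian, discrete, and — since $\Gamma$ is regular — $p$-regular hence $2$-regular if $p=2$; if $p\neq 2$ one replaces the Kummer/Artin-Schreier setup by the $p$-version, i.e. one invokes Proposition~\ref{prop:Z} directly with that $p$ rather than Corollary~\ref{cor:2henselian} (but then the formula depends on $p$, which we may not want for a single uniform statement — for a \emph{non-uniform} corollary about a single field this is fine). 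So: by Proposition~\ref{prop:Z} applied to $(K,v_{\Delta'})$ with this $p$ — which is $p$-henselian since $v$ is henselian, has discrete $p$-regular value group $\Gamma/\Delta'$, and satisfies case (2) or (3) or, after adjoining nothing since we only need definability of that ring — we get $\mathcal{O}_{v_{\Delta'}}$ is $\exists\forall$-$\emptyset$-definable; but that is the ring of a coarsening, not $\mathcal{O}_v$.

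So the genuine content, and the step I expect to be the main obstacle, is upgrading ``$\mathcal{O}_{v_{\Delta'}}$ is definable'' to ``$\mathcal{O}_v$ is definable'', which would require $\Delta'=0$, i.e. that $\Gamma$ itself is discrete. Thus I suspect the intended reading is simply: \emph{$\Gamma$ regular non-divisible} already implies $\Gamma$ is discrete — and indeed it does, because an archimedean non-divisible group embedded in $\mathbb{R}$ need not be discrete, BUT a regular group in the model-theoretic sense that is non-divisible must have a smallest positive element: the standard fact (\cite[Ch.~4]{PrestelDelzell}) is that a regular ordered abelian group is either divisible or discrete. Granting that, $\Gamma$ non-divisible and regular $\Rightarrow$ $\Gamma$ discrete, and being discrete it is in particular $2$-regular (regular), so Corollary~\ref{cor:2henselian} applies verbatim to the $2$-henselian (since henselian) valuation $v$ with discrete $2$-regular value group $\Gamma$, giving the $\exists\forall$-$\emptyset$-definition of $\mathcal{O}_v$ in $\mathcal{L}_{\rm ring}$. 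The one-line proof is therefore: \emph{a regular non-divisible ordered abelian group is discrete, so this is the special case of Corollary~\ref{cor:2henselian} where the value group happens to be regular rather than merely $2$-regular.} The main thing to get right is the citation/justification that regular $+$ non-divisible $\Rightarrow$ discrete, which is immediate from the structure theory of regular groups and the fact that $\mathbb{Z}$-groups are exactly the discrete regular ones.

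\begin{proof}
A regular ordered abelian group is either divisible or discrete, cf.\ \cite[\S4.1]{PrestelDelzell}. Hence if $\Gamma$ is regular and non-divisible, then $\Gamma$ is discrete, and in particular $2$-regular. Since $v$ is henselian it is $2$-henselian, so the claim follows from Corollary~\ref{cor:2henselian}.
\end{proof}
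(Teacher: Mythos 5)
There is a genuine gap: the dichotomy your final proof rests on --- ``a regular ordered abelian group is either divisible or discrete'' --- is false, and it is not what \cite[\S 4.1]{PrestelDelzell} says. Under the paper's definition, \emph{every} archimedean ordered abelian group is regular (it has no nontrivial proper convex subgroups, so the condition on quotients is vacuous), and there are plenty of archimedean groups that are neither divisible nor discrete: any dense non-divisible subgroup of $\mathbb{R}$, e.g.\ $\mathbb{Z}+\mathbb{Z}\sqrt{2}$, or the group $\mathbb{Z}[1/3]$ that you yourself bring up in the middle of your exploration and then discard. What is true is only the converse-type statement that a \emph{discrete} regular group is a $\mathbb{Z}$-group; regularity plus non-divisibility does not force discreteness. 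So your argument proves the corollary only when $\Gamma$ happens to be discrete, which is exactly the easy half.

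The paper's proof splits into the two cases. For $\Gamma$ discrete it does what you do: apply Corollary~\ref{cor:2henselian} (henselian $\Rightarrow$ $2$-henselian, discrete regular $\Rightarrow$ discrete $2$-regular). For $\Gamma$ non-discrete --- the case your proposal cannot reach --- it invokes Hong's definability result for henselian valuations with regular non-divisible value group \cite[Theorem 4]{Hong} and checks the quantifier complexity of that definition: the set $\Psi_\epsilon$ there is $\exists$-$\{\epsilon\}$-definable, hence so is $\Omega_\epsilon$, hence $\mathfrak{m}_v=\bigcap_{\epsilon\neq 0}\Omega_\epsilon$ is $\forall\exists$-$\emptyset$-definable, and finally $\mathcal{O}_v=(K\setminus\mathfrak{m}_v)^{-1}$ is $\exists\forall$-$\emptyset$-definable. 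Some input of this kind (or a new argument for dense regular value groups) is indispensable; Proposition~\ref{prop:Z} and Corollary~\ref{cor:2henselian} alone do not suffice, since they require discreteness.
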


\begin{proof}
The case where $\Gamma$ is discrete follows from Corollary \ref{cor:2henselian}.
In the case where $\Gamma$ is non-discrete, Hong \cite[Theorem 4]{Hong} gives a definition,
which one can check to be $\exists\forall$: Indeed, the set $\Psi_\epsilon$ defined there is $\exists$-$\{\epsilon\}$-definable,
thus so is $\Omega_\epsilon$, hence $\mathfrak{m}_v=\bigcap_{\epsilon\neq0}\Omega_\epsilon$ is $\forall\exists$-$\emptyset$-definable,
which finally implies that $\mathcal{O}_v=(K\setminus\mathfrak{m}_v)^{-1}$ is $\exists\forall$-$\emptyset$-definable.
\end{proof}

In fact, Hong does give a definition also in the case where $\Gamma$ is discrete, but since in that case he builds on the argument of Ax,
the definition he gets is at best $\exists\forall\exists$.
The assumption that $\Gamma$ is non-divisible is, of course, necessary.

\section{Value group $\mathbb{Z}$ in the Macintyre language}

In this section we prove our negative definability results, in particular Theorem \ref{thm:noE}.
Let $(K,v)$ be a henselian valued field with value group $\Gamma=\Gamma_v$ 
and residue field $F=\bar{K}_v$ of characteristic zero.
In order to prove 
that $\mathcal{O}_v$ is not $\exists$-$\emptyset$-definable in $\mathcal{L}_{\rm Mac}$,
it suffices to construct henselian valued fields $(K_1,v_1)$, $(K_2,v_2)$ that are elementarily equivalent to $(K,v)$
such that $K_1$ is algebraically closed in $K_2$ (since $K_1$ is then an $\mathcal{L}_{\rm Mac}$-substructure of $K_2$) 
and $\mathcal{O}_{v_1}\not\subseteq\mathcal{O}_{v_2}$.
We first recall some standard definitions and facts:

\begin{Definition}
For an ordered abelian group $\Gamma$ we denote by $F((x^\Gamma))$ the field of generalized power series $\sum_{\gamma\in\Gamma}a_\gamma x^\gamma$
with well-ordered support.
The natural power series valuation $v(\sum_{\gamma\in\Gamma}a_\gamma x^\gamma)=\min\{\gamma:a_\gamma\neq0\}$
has value group $\Gamma$, residue field $F$ and is henselian, cf.~\cite[Corollary 18.4.2]{Efrat}.
As usual, we write $F((x)):=F((x^\mathbb{Z}))$ for the field of formal Laurent series.
If $\Gamma_1,\Gamma_2$ are ordered abelian groups 
there is a natural isomorphism $F((x^{\Gamma_1\oplus\Gamma_2}))\cong F((x_1^{\Gamma_1}))((x_2^{\Gamma_2}))$.
\end{Definition}

\begin{Construction}
Let $\Delta$ be the divisible hull of $\Gamma$.
We consider the power series fields
$$
 K_1=F((x^\Delta))((t^\Gamma))
$$
with value group $u(K_1^\times)=\Delta\oplus\Gamma$
and
$$
 K_2=F((s^\Gamma))((y^\Delta))((z^\Delta))
$$
with value group $v_2(K_2^\times)=\Gamma\oplus\Delta\oplus\Delta$.
Moreover, 
let $F_1:=F((x^\Delta))$ and
denote
by $v_1$ the power series valuation on $K_1=F_1((t^\Gamma))$ with value group $\Gamma$ and residue field $F_1$.
Define an embedding $\phi$ of $K_1$ into the subfield
$$
 K_0=F((s^\Gamma))((y^\Delta))((z^\Gamma))
$$ 
of $K_2$ as follows:
For 
$$
 f=\sum_\gamma f_\gamma(x)t^\gamma\in K_1
$$
with $f_\gamma(x)\in F_1$ for all $\gamma$ let
$$
 \phi(f)=\sum_\gamma f_\gamma(y) s^\gamma z^\gamma\in K_0.
$$
This is indeed a homomorphism:
For example, we can view it as the composition $\phi=\alpha\circ\epsilon$ of the canonical embedding
$\epsilon:K_1\rightarrow K_0$ given by $\epsilon(x)=y$, $\epsilon(t)=z$,
with the automorphism $\alpha$ of $K_0$ that fixes
$F((s^\Gamma))((y^\Delta))$ and maps $\alpha(z^\gamma)=s^\gamma z^\gamma$.
$$
 \xymatrix{
 && K_2=F\overbrace{((s^\Gamma))((y^\Delta))((z^\Delta))}^{v_2}\ar@{-}[d] \\
 K_1=F\overbrace{((x^\Delta))\underbrace{((t^\Gamma))}_{v_1}}^u\ar@{->}[rr]^\phi\ar@{-}[d] && K_0=F((s^\Gamma))((y^\Delta))((z^\Gamma))\ar@{-}[d] \\
 F_1=F((x^\Delta))\ar@{-}[d] && F((s^\Gamma))((y^\Delta))\ar@{-}[d]&\\
 F && F((s^\Gamma))\ar@{-}[d] \\
 && F\\
 }
$$
\end{Construction}

\begin{Lemma}
If $F((x^\mathbb{Q}))\equiv F$ and $\Gamma\equiv\Gamma\oplus\mathbb{Q}$, then $(K_1,v_1)\equiv(K_2,v_2)\equiv(K,v)$.
\end{Lemma}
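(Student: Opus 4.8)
The plan is to derive everything from the Ax--Kochen--Ershov principle: two henselian valued fields of equicharacteristic zero are elementarily equivalent provided their residue fields are elementarily equivalent as fields and their value groups are elementarily equivalent as ordered abelian groups (see e.g.\ \cite{EP}). All three valued fields qualify. Indeed $(K,v)$ is henselian with residue field of characteristic zero by hypothesis; $(K_1,v_1)$ is the $t$-adic power series valuation on $K_1=F_1((t^\Gamma))$, so it has value group $\Gamma$ and residue field $F_1=F((x^\Delta))$; and $(K_2,v_2)$ is the full power series valuation on $K_2\cong F((x^{\Gamma\oplus\Delta\oplus\Delta}))$, so it has value group $\Gamma\oplus\Delta\oplus\Delta$ and residue field $F$. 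The latter two are henselian by \cite[Corollary 18.4.2]{Efrat}, and since their residue fields have characteristic zero they are of equicharacteristic zero. Note also that $\Gamma\equiv\Gamma\oplus\mathbb{Q}$ forces $\Gamma\neq 0$, so its divisible hull $\Delta$, and hence $\Delta\oplus\Delta$, is a non-trivial divisible ordered abelian group.

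To obtain $(K_1,v_1)\equiv(K,v)$ it then suffices, the value groups being equal, to show $F_1\equiv F$. For this I would invoke the Ax--Kochen--Ershov principle a second time, applied to $F((x^\Delta))$ and $F((x^\mathbb{Q}))$: these are henselian of equicharacteristic zero, share the residue field $F$, and have value groups $\Delta$ and $\mathbb{Q}$, which are elementarily equivalent since the theory of non-trivial divisible ordered abelian groups is complete (it admits quantifier elimination, cf.\ \cite{PrestelDelzell}). Hence $F((x^\Delta))\equiv F((x^\mathbb{Q}))$ as valued fields, in particular as fields, so $F_1\equiv F((x^\mathbb{Q}))\equiv F$ by the first hypothesis.

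To obtain $(K_2,v_2)\equiv(K,v)$ the residue fields already agree, so it remains to show $\Gamma\oplus\Delta\oplus\Delta\equiv\Gamma$ as ordered abelian groups. Reading this group as $\Gamma\oplus(\Delta\oplus\Delta)$, its top summand $\Delta\oplus\Delta$ is non-trivial divisible, hence elementarily equivalent to $\mathbb{Q}$; granting that forming $\Gamma\oplus H$ with $\Gamma$ fixed preserves elementary equivalence in $H$ (discussed below), this yields $\Gamma\oplus(\Delta\oplus\Delta)\equiv\Gamma\oplus\mathbb{Q}\equiv\Gamma$, where the last step is the second hypothesis. Combining the two previous paragraphs with transitivity of $\equiv$ gives $(K_1,v_1)\equiv(K_2,v_2)\equiv(K,v)$.

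The only step above that is not routine bookkeeping with Ax--Kochen--Ershov and the completeness of the theory of divisible ordered abelian groups is the claimed preservation of elementary equivalence under attaching a top summand; put differently, that putting a non-trivial divisible ordered abelian group on top of $\Gamma$ produces, up to elementary equivalence, a group that depends only on the elementary type of $\Gamma$. I expect this to be the point requiring the most care: either one cites a Feferman--Vaught-type theorem for lexicographic products of ordered abelian groups, or --- lacking a sufficiently clean reference --- one proves the precise instance needed, namely $\Gamma\oplus D\equiv\Gamma\oplus D'$ for all non-trivial divisible ordered abelian groups $D,D'$, directly, for example by a back-and-forth argument exploiting quantifier elimination for divisible ordered abelian groups.
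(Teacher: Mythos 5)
Your proposal is correct and takes essentially the same route as the paper: both arguments reduce to the Ax--Kochen--Ershov theorem together with completeness of the theory of (non-trivial) divisible ordered abelian groups, with the residue-field equivalence $F((x^\Delta))\equiv F((x^{\mathbb{Q}}))\equiv F$ and the value-group chain $\Gamma\oplus\Delta\oplus\Delta\equiv\Gamma\oplus\mathbb{Q}\equiv\Gamma$ exactly as in the paper. The one step you flag as needing care --- that lexicographic products preserve elementary equivalence --- is precisely what the paper settles by citing \cite[proof of 3.3]{Giraudet}, so your proposed resolution by citation (rather than a hand-made back-and-forth) is available.
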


\begin{proof}
Note that $\mathbb{Q}\equiv\Delta\equiv\Delta\oplus\Delta$ since the theory of divisible ordered abelian groups is complete, cf.\ \cite[Theorem 4.1.1]{PrestelDelzell}. 
Thus $F((x^\mathbb{Q}))\equiv F((x^\Delta))$ by the Ax-Kochen-Ershov theorem \cite[Theorem 4.6.4]{PrestelDelzell},
and $\Gamma\oplus\mathbb{Q}\equiv\Gamma\oplus\Delta\oplus\Delta$,
since lexicographic products preserve elementary equivalence, cf.~\cite[proof of 3.3]{Giraudet}.
Therefore,
$$
 (\bar{K}_1)_{v_1}=F_1=F((x^\Delta))\equiv F((x^\mathbb{Q}))\equiv F=(\bar{K}_2)_{v_2} = K_v,
$$ 
and 
$$
 \Gamma_{v_2}=\Gamma\oplus\Delta\oplus\Delta\equiv\Gamma\oplus\mathbb{Q}\equiv\Gamma=\Gamma_{v_1}=\Gamma_v.
$$ 
Hence, since $(K_1,v_1)$, $(K_2,v_2)$ and $(K,v)$ are henselian valued with residue field of characteristic zero,
the Ax-Kochen-Ershov theorem implies that
$(K_1,v_1)\equiv(K_2,v_2)\equiv(K,v)$ as valued fields.
\end{proof}

\begin{Lemma}\label{lem:valres}
$\phi^{-1}(\mathcal{O}_{v_2})=\mathcal{O}_u$
\end{Lemma}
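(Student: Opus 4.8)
The plan is to unwind the definitions of the valuations $u$ on $K_1$ and $v_2$ on $K_2$ and trace through the explicit formula for $\phi$. Recall that $K_1 = F((x^\Delta))((t^\Gamma))$ carries $u$ with value group $\Delta \oplus \Gamma$, where for $f = \sum_\gamma f_\gamma(x) t^\gamma$ the value $u(f)$ records first the $t$-adic order $\min\{\gamma : f_\gamma \neq 0\}$ and then, as the finer coordinate, the $x$-adic order of the leading coefficient $f_{\gamma_0}(x) \in F_1$. On the other side, $v_2$ on $K_2 = F((s^\Gamma))((y^\Delta))((z^\Delta))$ has value group $\Gamma \oplus \Delta \oplus \Delta$ ordered so that the $z$-adic order dominates, then the $y$-adic order, then the $s$-adic order. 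Since $\phi$ lands in the subfield $K_0 = F((s^\Gamma))((y^\Delta))((z^\Gamma))$, restricting $v_2$ to $K_0$ gives the power series valuation with value group $\Gamma \oplus \Delta \oplus \Gamma$, dominated by the $z$-adic order.

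First I would compute $v_2(\phi(f))$ for $f = \sum_\gamma f_\gamma(x) t^\gamma \in K_1^\times$ with leading term at $\gamma_0 = \min\{\gamma : f_\gamma \neq 0\}$. Writing $\phi(f) = \sum_\gamma f_\gamma(y) s^\gamma z^\gamma$, the $z$-adic order of $\phi(f)$ is $\gamma_0$, and within the coefficient of $z^{\gamma_0}$, namely $f_{\gamma_0}(y) s^{\gamma_0}$, the $y$-adic order is the $x$-adic order of $f_{\gamma_0}$, and the residual $s$-adic order is $\gamma_0$. So $v_2(\phi(f)) = (\gamma_0, \mathrm{ord}_x f_{\gamma_0}, \gamma_0) \in \Gamma \oplus \Delta \oplus \Gamma$. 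The key observation is that $\phi(f) \in \mathcal{O}_{v_2}$, i.e. $v_2(\phi(f)) \geq 0$, if and only if the leading pair $(\gamma_0, \mathrm{ord}_x f_{\gamma_0})$ is $\geq 0$ in the lexicographic order $\Gamma \oplus \Delta$ that dominates — because once $\gamma_0 > 0$ the trailing $z$-coordinate is irrelevant, once $\gamma_0 < 0$ likewise, and when $\gamma_0 = 0$ we need $\mathrm{ord}_x f_0 \geq 0$, after which the trailing $\gamma_0 = 0$ coordinate is fine. But $(\gamma_0, \mathrm{ord}_x f_{\gamma_0}) \geq 0$ in $\Gamma \oplus \Delta$ is precisely the condition $u(f) \geq 0$, i.e. $f \in \mathcal{O}_u$. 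This gives $\phi^{-1}(\mathcal{O}_{v_2}) = \mathcal{O}_u$.

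I would organize this as: (i) identify the valuation induced by $v_2$ on $K_0$ as the iterated power series valuation with value group $\Gamma\oplus\Delta\oplus\Gamma$ (inverse lexicographic, $z$ coarsest); (ii) via the factorization $\phi = \alpha \circ \epsilon$, note that $\alpha$ is an automorphism of $K_0$ and $v_2 \circ \alpha$ and $v_2$ agree on $K_0$ up to the claimed computation — in fact it is cleanest to just compute $v_2(\phi(f))$ directly from the series expansion of $\phi(f)$ as above, rather than chase $\alpha$; (iii) extract the leading term and read off the three coordinates; (iv) compare with $u(f)$ and conclude. The main obstacle, such as it is, will be bookkeeping the order conventions: being careful that in $\Gamma\oplus\Delta$ the $\Gamma$-coordinate (corresponding to $t$, resp. $z$) is the dominant one while in $\Gamma\oplus\Delta\oplus\Gamma$ on $K_0$ it is the first $\Gamma$ (corresponding to $z$) that dominates, so that the spurious third coordinate never affects the sign of the value except in a way already controlled by the first two. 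Once the indexing is pinned down, the equivalence $v_2(\phi(f))\geq 0 \iff u(f)\geq 0$ is immediate.
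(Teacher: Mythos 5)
Your proof is correct, but it runs along a somewhat different track than the paper's. You compute $v_2(\phi(f))$ explicitly from the series expansion: for $f=\sum_\gamma f_\gamma(x)t^\gamma$ with leading exponent $\gamma_0$, the $z$-order of $\phi(f)$ is $\gamma_0$, the $y$-order of the leading coefficient $f_{\gamma_0}(y)s^{\gamma_0}$ is $\operatorname{ord}_x f_{\gamma_0}$, and the residual $s$-order is again $\gamma_0$; since under the paper's inverse-lexicographic convention the $z$-coordinate dominates, then $y$, then $s$, positivity of $v_2(\phi(f))$ is decided by the pair $(\gamma_0,\operatorname{ord}_x f_{\gamma_0})$, which is exactly $u(f)$ up to the ordering of coordinates — and you handle the conventions correctly, including the only delicate case $\gamma_0=0$, $\operatorname{ord}_x f_0=0$, where the trailing $s$-coordinate is $0$ and harmless. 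The paper instead argues structurally: using the factorization $\phi=\alpha\circ\epsilon$ it notes $\phi(\mathcal{O}_u)\subseteq\mathcal{O}_{v_2}$ \emph{and} $\phi(\mathfrak{m}_u)\subseteq\mathfrak{m}_{v_2}$ ($\epsilon$ obviously respects the valuations, and $\alpha$ preserves the valuation ring of $v_2|_{K_0}$), and then the reverse inclusion $\phi^{-1}(\mathcal{O}_{v_2})\subseteq\mathcal{O}_u$ is automatic, since $f\notin\mathcal{O}_u$ gives $\phi(f)^{-1}=\phi(f^{-1})\in\mathfrak{m}_{v_2}$, so $\phi(f)\notin\mathcal{O}_{v_2}$. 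The trade-off: the paper's argument avoids all coordinate bookkeeping (which, as you note, is where the inverse-lex conventions could trip one up), while your computation yields the explicit value $v_2(\phi(f))=(\gamma_0,\operatorname{ord}_x f_{\gamma_0},\gamma_0)$, which makes the lemma (and the induced embedding $\phi_*$ used in the next lemma) completely transparent. Either way the statement follows; just make sure, when writing it up, to say once that $\phi(f)$ really has the displayed well-ordered support in $K_0$ (this is guaranteed by the factorization $\phi=\alpha\circ\epsilon$ from the construction) before reading off its leading terms.
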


\begin{proof}
The definition of $\phi$ implies that $\phi(\mathcal{O}_u)\subseteq\mathcal{O}_{v_2}$ and $\phi(\mathfrak{m}_u)\subseteq\mathfrak{m}_{v_2}$:
Indeed, for $\epsilon:K_1\rightarrow K_0$ this statement is obvious, and $\alpha:K_0\rightarrow K_0$ leaves $\mathcal{O}_{v_2|_{K_0}}$ invariant.
It follows that $\phi(\mathcal{O}_u)=\mathcal{O}_{v_2}\cap\phi(K_1)$.
\end{proof}

\begin{Lemma}
$\phi(K_1)$ is algebraically closed in $K_2$.
\end{Lemma}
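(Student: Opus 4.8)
The plan is to show that the inclusion $\phi(K_1)\subseteq K_2$, regarded as an extension of valued fields via $v_2$, is an extension of \emph{henselian} valued fields of equal characteristic zero with trivial residue field extension and torsion-free quotient of value groups; such an extension is automatically relatively algebraically closed. First I would note that by Lemma~\ref{lem:valres} the map $\phi$ is an isomorphism of valued fields from $(K_1,u)$ onto $\phi(K_1)$ equipped with the restriction $v_2|_{\phi(K_1)}$. Since $(K_1,u)=(F((x^{\Delta\oplus\Gamma})),u)$ is a power series field it is henselian, hence so is $\phi(K_1)$; and $(K_2,v_2)$ is henselian by construction. Both have residue field $F$, of characteristic zero, so the residue extension is trivial and every finite extension of $\phi(K_1)$ inside $K_2$ is defectless.

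Next I would locate the value group of $\phi(K_1)$ inside $\Gamma_{v_2}=\Gamma\oplus\Delta\oplus\Delta$. For $0\neq f=\sum_\gamma f_\gamma(x)t^\gamma\in K_1$ with $\gamma_0=\min\{\gamma:f_\gamma\neq0\}$, and with $\delta_0\in\Delta$ the $x$-order of the leading coefficient $f_{\gamma_0}$, the term of $\phi(f)=\sum_\gamma f_\gamma(y)s^\gamma z^\gamma$ of least $z$-exponent is $f_{\gamma_0}(y)s^{\gamma_0}z^{\gamma_0}$, so that $v_2(\phi(f))=(\gamma_0,\delta_0,\gamma_0)$. Letting $f$ range over $K_1^\times$ (e.g.\ over the monomials $x^\delta t^\gamma$) gives
\[
 v_2(\phi(K_1)^\times)=H:=\{(\gamma,\delta,\gamma):\gamma\in\Gamma,\ \delta\in\Delta\}.
\]
The group homomorphism $\Gamma\oplus\Delta\oplus\Delta\to\Delta$, $(\gamma,\delta_1,\delta_2)\mapsto\delta_2-\gamma$, is surjective with kernel exactly $H$, so $\Gamma_{v_2}/H\cong\Delta$ is torsion-free.

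To conclude, let $\theta\in K_2$ be algebraic over $\phi(K_1)$ and set $M=\phi(K_1)(\theta)\subseteq K_2$. Since $\phi(K_1)$ is henselian, $v_2|_M$ is its unique extension to $M$, and since $M/\phi(K_1)$ is defectless we have $[M:\phi(K_1)]=ef$ with $e=[v_2(M^\times):H]$ and $f=[\overline M:F]$, cf.~\cite{EP}. Here $f=1$ because $\overline M$ lies in the residue field $F$ of $K_2$, and $e=1$ because $v_2(M^\times)/H$ is a finite, hence trivial, subgroup of the torsion-free group $\Gamma_{v_2}/H$. Therefore $[M:\phi(K_1)]=1$, i.e.\ $\theta\in\phi(K_1)$.

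The point I expect to carry the weight is the interplay between this value-group bookkeeping and the construction: one cannot instead factor through the intermediate field $K_0=F((s^\Gamma))((y^\Delta))((z^\Gamma))$, since $K_0$ is \emph{not} relatively algebraically closed in $K_2$ (it misses Puiseux-type elements such as $z^{\gamma/p}$ for $\gamma\in\Gamma$ not $p$-divisible). Twisting $\epsilon$ by the automorphism $\alpha$, i.e.\ replacing $z^\gamma$ by $s^\gamma z^\gamma$, is exactly what shears $v_2(\phi(K_1)^\times)$ into the pure subgroup $H$; checking purity and then invoking the fundamental equality $[M:K]=ef$ in the henselian, equal-characteristic-zero setting is the crux.
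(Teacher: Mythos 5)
Your proof is correct and follows essentially the same route as the paper: you show the value group $v_2(\phi(K_1)^\times)=\{(\gamma,\delta,\gamma)\}$ is pure in $\Gamma\oplus\Delta\oplus\Delta$ (you phrase this as torsion-freeness of the quotient, the paper as divisibility), note the residue field extension is trivial, and conclude from henselianity and defectlessness in residue characteristic zero that no proper finite subextension exists. The only cosmetic difference is that you invoke the fundamental equality $[M:\phi(K_1)]=ef$ where the paper cites algebraic maximality and unramifiedness, which amounts to the same argument.
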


\begin{proof}
By Lemma \ref{lem:valres}, the embedding $\phi:K_1\rightarrow K_2$ induces an embedding 
$$
 \phi_*:\Gamma_u=\Delta\oplus\Gamma\rightarrow\Gamma\oplus\Delta\oplus\Delta=\Gamma_{v_2}
$$
of value groups given by $\phi_*(\delta,\gamma)=(\gamma,\delta,\gamma)$.
Observe that $\phi_*(\Gamma_u)$ is pure in $\Gamma_{v_2}$: 
Indeed, if $\phi_*(\delta,\gamma)=n(\gamma_1,\delta_1,\delta_2)$ with $\gamma_1\in\Gamma$, $\delta_1,\delta_2\in\Delta$,
then $\phi_*(\delta,\gamma)=n\phi_*(\gamma_1,\delta_1)\in n\phi_*(\Gamma_u)$.

Let $L$ be a finite extension of $K_1':=\phi(K_1)$ in $K_2$. 
The pureness of the value groups implies that $v_2$ is unramified in $L|K_1'$,
and both fields have the same residue field $F$.
So since the henselian valued field $(K_1',v_2)$ of residue characteristic zero is algebraically maximal (see \cite[Theorem 4.1.10]{EP}),
we conclude that $L=K_1'$.
\end{proof}

\begin{Proposition}\label{prop:noE}
If $(K,v)$ is a henselian valued field with value group $\Gamma_v\equiv\Gamma_v\oplus\mathbb{Q}$
and residue field $F$ of characteristic zero with $F\equiv F((\mathbb{Q}))$, then there is no $\exists$-$\emptyset$-formula in $\mathcal{L}_{\rm Mac}$
that defines the valuation ring of $v$.
\end{Proposition}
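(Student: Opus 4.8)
The plan is to run the reduction spelled out at the start of this section, feeding the fields and maps of the Construction and the three lemmas preceding the statement into the preservation behaviour of existential formulas. Assume, for contradiction, that $\psi(x)$ is an $\exists$-$\emptyset$-formula in $\mathcal{L}_{\rm Mac}$ with $\psi(K)=\mathcal{O}_v$. Since each $P_n$ is $\emptyset$-definable in $\mathcal{L}_{\rm ring}$, the assertion ``$\forall x\,(\psi(x)\leftrightarrow\mathcal{O}(x))$'' is expressible in $\mathcal{L}_{\rm ring}\cup\{\mathcal{O}\}$, so from $(K_1,v_1)\equiv(K_2,v_2)\equiv(K,v)$ (the first of the three lemmas) we deduce $\psi(K_1)=\mathcal{O}_{v_1}$ and $\psi(K_2)=\mathcal{O}_{v_2}$, where $K_1$ and $K_2$ are regarded as $\mathcal{L}_{\rm Mac}$-structures via their own $n$-th power predicates. (This is just the ``only if'' half of the $\exists$-case of Corollary~\ref{cor:Prestel}, which one could also invoke directly.)

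First I would check that $\phi$ exhibits $K_1$ as an $\mathcal{L}_{\rm Mac}$-substructure of $K_2$. By the last of the three lemmas, $\phi(K_1)$ is algebraically closed, hence relatively algebraically closed, in $K_2$, so $(K_2^\times)^n\cap\phi(K_1)=(\phi(K_1)^\times)^n$ for every $n$, as recorded in the remark after Corollary~\ref{cor:Prestel}. Adjoining $0$, which lies in every $P_n$, this gives $P_n^{K_2}\cap\phi(K_1)=\phi(P_n^{K_1})$, so $\phi$ is an $\mathcal{L}_{\rm Mac}$-embedding. Since $\psi$ is existential, it passes from a substructure up to an overstructure: if $a\in K_1$ lies in $\psi(K_1)=\mathcal{O}_{v_1}$, then $\phi(a)\in\psi(K_2)=\mathcal{O}_{v_2}$, i.e.\ $a\in\phi^{-1}(\mathcal{O}_{v_2})=\mathcal{O}_u$ by Lemma~\ref{lem:valres}. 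Hence $\mathcal{O}_{v_1}\subseteq\mathcal{O}_u$.

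To finish I would contradict this inclusion. The hypothesis $\Gamma_v\equiv\Gamma_v\oplus\mathbb{Q}$ excludes $\Gamma_v=0$, so the divisible hull $\Delta$ is non-trivial, and $v_1$ — the coarsening of $u$ obtained by collapsing the convex subgroup $\Delta$ of $\Gamma_u=\Delta\oplus\Gamma$ — is a \emph{proper} coarsening of $u$. Concretely, any element of $F_1=F((x^\Delta))$ of negative $x$-adic value, for instance $x^{-1}$ (with $x$ the power-series variable of $F_1$), is a $v_1$-unit but does not lie in $\mathcal{O}_u$; thus $\mathcal{O}_{v_1}\setminus\mathcal{O}_u\neq\emptyset$, contradicting the previous paragraph. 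This proves the proposition, and in particular, applied to $K=\mathbb{C}((t))$ with its $t$-adic valuation, Theorem~\ref{thm:noE}.

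I expect essentially all of the difficulty to lie in the material preceding the statement: engineering $K_1$ and $K_2$ so that Ax--Kochen--Ershov yields $(K_1,v_1)\equiv(K_2,v_2)\equiv(K,v)$ while the twisted embedding $\phi=\alpha\circ\epsilon$ forces $\phi(K_1)$ to be algebraically maximal — hence algebraically closed — inside $K_2$ despite $\mathcal{O}_{v_1}\not\subseteq\mathcal{O}_{v_2}$. Granting those lemmas, the deduction above is bookkeeping; the one point deserving care is the passage from ``relatively algebraically closed'' to ``$\mathcal{L}_{\rm Mac}$-substructure'', and it is precisely the compatibility condition on $n$-th powers from Corollary~\ref{cor:Prestel} that keeps the obstruction alive in the Macintyre language rather than letting it collapse as it would for a purely ring-theoretic statement.
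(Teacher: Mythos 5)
Your proof is correct and takes essentially the same route as the paper: it assembles the Construction, the elementary-equivalence lemma, Lemma~\ref{lem:valres} and the relative algebraic closedness of $\phi(K_1)$ into exactly the reduction stated at the start of the section, merely spelling out the $\mathcal{L}_{\rm Mac}$-substructure and existential-persistence steps that the paper leaves implicit. The only additions are cosmetic: your observation that $\Gamma_v\equiv\Gamma_v\oplus\mathbb{Q}$ forces $\Gamma_v\neq 0$ (hence $\mathcal{O}_u\subsetneqq\mathcal{O}_{v_1}$, witnessed by $x^{-\delta}$ for any positive $\delta\in\Delta$) makes explicit what the paper tacitly assumes.
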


\begin{proof}
We apply the above construction and
identify $K_1$ with $\phi(K_1)\subseteq K_2$.
Then $\mathcal{O}_{v_2}\cap K_1=\mathcal{O}_u$,
and since $\mathcal{O}_u\subsetneqq\mathcal{O}_{v_1}$, this implies that $\mathcal{O}_{v_1}\not\subseteq\mathcal{O}_{v_2}$.
Thus, $(K_1,v_1)$ and $(K_2,v_2)$ satisfy all properties 
listed at the beginning of this section,
which concludes the proof.
\end{proof}

Since $\mathbb{Z}\equiv\mathbb{Z}\oplus\mathbb{Q}$ and $\mathbb{C}\cong\mathbb{C}((\mathbb{Q}))$,
Proposition \ref{prop:noE} immediately applies to $\mathbb{C}((t))$, thereby proving Theorem \ref{thm:noE}.
We will discuss more applications of Proposition \ref{prop:noE} in the next section.

\section{Higher dimensional local fields}
\label{sec:higherlocalfields}

In this last section we briefly discuss the henselian valuations on higher dimensional local fields,
by which we mean the following:

\begin{Definition}
A {\bf ($1$-dimensional) local field} is a completion of a number field (i.e.\ a field isomorphic to $\mathbb{R}$, $\mathbb{C}$
or a finite extension of $\mathbb{Q}_p$), or a completion of the function field of a curve over a finite field
(i.e.\ a field isomorphic to a finite extension of $\mathbb{F}_p((t))$).
An {\bf $n$-dimensional local field} is a complete valued field with value group $\mathbb{Z}$ and residue field
an $(n-1)$-dimensional local field.
\end{Definition}

Examples for $2$-dimensional local fields are $\mathbb{R}((t))$, $\mathbb{C}((t))$, $\mathbb{Q}_p((t))$ and $\mathbb{F}_p((t))((s))$.
An $n$-dimensional local field $K$ carries either $k=n$ or $k=n-1$ many different henselian valuations $v_1,\dots,v_k$,
where the value group of $v_k$ is a lexicographic product of $k$ copies of $\mathbb{Z}$.
(The fact that there are no other henselian valuations except for the obvious ones follows from F.K. Schmidt's theorem \cite[Theorem 4.4.1]{EP}.)

\begin{Lemma}\label{lem:regularquotient}
If an ordered abelian group $\Gamma$ has a proper convex subgroup $H$ such that $\Gamma/H$ is regular, then $\Gamma\equiv\Gamma\oplus\mathbb{Q}$.
\end{Lemma}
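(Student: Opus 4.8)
\textbf{Proof plan for Lemma \ref{lem:regularquotient}.}
The plan is to reduce the claimed elementary equivalence to the Ax--Kochen--Ershov principle applied to $\Gamma$ equipped with a convex subgroup, together with the completeness of the theory of regular (equivalently: $\mathbb{Z}$-like, or divisible) ordered abelian groups in the relevant quotient. More precisely, I would view both $\Gamma$ and $\Gamma\oplus\mathbb{Q}$ as ordered abelian groups together with a distinguished convex subgroup --- on the left $H$, on the right $H$ again (sitting inside $\Gamma$, which in turn sits inside $\Gamma\oplus\mathbb{Q}$ via the second coordinate). The quotient in the first case is $\Gamma/H$ and in the second case is $(\Gamma\oplus\mathbb{Q})/H\cong (\Gamma/H)\oplus\mathbb{Q}$. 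The hypothesis that $\Gamma/H$ is regular means $\Gamma/H\equiv\mathbb{Z}$ or $\Gamma/H$ is divisible; in either case $\Gamma/H\equiv(\Gamma/H)\oplus\mathbb{Q}$, since $\mathbb{Z}\equiv\mathbb{Z}\oplus\mathbb{Q}$ (both are $\mathbb{Z}$-groups) and a divisible group stays divisible after $\oplus\mathbb{Q}$ and all divisible ordered abelian groups are elementarily equivalent, cf.\ \cite[Theorem 4.1.1, Theorem 4.1.3]{PrestelDelzell}.

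First I would invoke the Ax--Kochen--Ershov theorem for ordered abelian groups with a convex subgroup (equivalently, the relative quantifier elimination / Feferman--Vaught-type result for the pair), which says that two ordered abelian groups with a distinguished convex subgroup are elementarily equivalent as soon as their subgroups are elementarily equivalent and their quotients are elementarily equivalent; this is exactly the statement that lexicographic-type extensions preserve elementary equivalence, used already in the Lemma preceding Proposition \ref{prop:noE} via \cite[proof of 3.3]{Giraudet}. Applying this with subgroup $H$ on both sides and with the quotients $\Gamma/H$ and $(\Gamma/H)\oplus\mathbb{Q}$, which are elementarily equivalent by the previous paragraph, yields $(\Gamma,H)\equiv(\Gamma\oplus\mathbb{Q},H)$ as ordered abelian groups with a distinguished convex subgroup, and in particular $\Gamma\equiv\Gamma\oplus\mathbb{Q}$ as ordered abelian groups after forgetting the predicate.

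The main obstacle is making the appeal to the ``relative'' Ax--Kochen--Ershov / Giraudet-style transfer precise: one has to be sure that the statement used preserves elementary equivalence in the two-sorted (or one-sorted-with-predicate) setting rather than merely for the full lexicographic product of two \emph{given} groups, and that the embedding realizing $\Gamma\oplus\mathbb{Q}$ as an extension of $\Gamma$ with the \emph{same} convex subgroup $H$ and quotient $(\Gamma/H)\oplus\mathbb{Q}$ is legitimate --- i.e.\ that $H$ really is convex in $\Gamma\oplus\mathbb{Q}$ and that the quotient computes as claimed. Both are routine: $H$ convex in $\Gamma$ and $\Gamma$ convex in $\Gamma\oplus\mathbb{Q}$ (being the kernel of projection to the top factor) gives $H$ convex in $\Gamma\oplus\mathbb{Q}$, and $(\Gamma\oplus\mathbb{Q})/H\cong(\Gamma/H)\oplus\mathbb{Q}$ is immediate from the definition of the inverse lexicographic product. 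An alternative, if one prefers to avoid the two-sorted transfer entirely, is to pass through valued fields directly: realize $\Gamma$ as $\Gamma_v$ for a henselian $(K,v)$ of residue characteristic zero with $\bar K_v$ having a henselian valuation of value group $\Gamma/H$ and regular-hence-$\mathbb{Q}$-absorbing residue structure, then run the same power-series comparison as in the proof of Proposition \ref{prop:noE}; but the group-theoretic route above is shorter.
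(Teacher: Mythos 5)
There is a genuine gap, and it sits exactly at the heart of the lemma. You assert that ``$\Gamma/H$ regular means $\Gamma/H\equiv\mathbb{Z}$ or $\Gamma/H$ is divisible.'' This is false: regularity only requires quotients by \emph{nontrivial} convex subgroups to be divisible, so every archimedean ordered abelian group is regular, and a dense non-divisible one such as $\mathbb{Z}+\mathbb{Z}\sqrt{2}\subseteq\mathbb{R}$ is regular but neither divisible nor elementarily equivalent to $\mathbb{Z}$ (discreteness is first order). Your dichotomy therefore skips precisely the case that makes the lemma nontrivial. The needed statement $\Delta\equiv\Delta\oplus\mathbb{Q}$ for $\Delta=\Gamma/H$ regular, dense and non-divisible is true, but requires an argument: the paper first reduces to $\Gamma$ regular, then uses that regularity is preserved under elementary equivalence (via \cite{Conrad}) to pass to an elementary extension $\Gamma^*$ having a proper convex subgroup with \emph{divisible} quotient, and repeats the splitting argument to reduce to the divisible case; alternatively (as the paper's footnote notes) one can invoke the Robinson--Zakon classification \cite{RobinsonZakon} of regular groups, under which the invariants $[\Delta:n\Delta]$ are unchanged by $\oplus\,\mathbb{Q}$. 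Without one of these ingredients your argument only proves the lemma when $\Gamma/H$ is discrete or divisible (which happens to suffice for the higher local field examples, but not for the lemma as stated).

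A secondary point: the transfer you call a ``relative Ax--Kochen--Ershov for pairs'' is not literally Giraudet's preservation of $\equiv$ under lexicographic products, because $(\Gamma,H)$ is in general a non-split extension; to conclude anything from the theories of $H$ and $\Gamma/H$ you also need Giraudet's theorem that $\Gamma\equiv H\oplus\Gamma/H$ for any convex subgroup $H$ (cited in the paper from the bottom of p.~282 of \cite{Giraudet}). With that in hand the paper bypasses the two-sorted setting entirely via the chain $\Gamma\oplus\mathbb{Q}\equiv H\oplus\Gamma/H\oplus\mathbb{Q}\equiv H\oplus\Gamma/H\equiv\Gamma$, which is the clean way to organize the step you flagged as ``the main obstacle''; your pair-version would need its own justification rather than being ``routine.''
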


\begin{proof}
First of all, $\Gamma\equiv H\oplus\Gamma/H$, cf.~\cite[bottom of p.~282]{Giraudet},
and if $\Gamma/H\equiv\Gamma/H\oplus\mathbb{Q}$, then 
$$
 \Gamma\oplus\mathbb{Q}\equiv H\oplus\Gamma/H\oplus\mathbb{Q}\equiv H\oplus\Gamma/H\equiv\Gamma
$$
since lexicographic products preserve elementary equivalence \cite[proof of 3.3]{Giraudet}. 
Therefore we can assume without loss of generality that $\Gamma$ is regular.
Since regularity is preserved under elementary equivalence (as follows for example from \cite[Proposition 4]{Conrad}),
some elementary extension $\Gamma\prec\Gamma^*$ has a proper convex subgroup $H$ with $\Gamma^*/H$ divisible.\footnote{Alternatively, one could prove the regular case using the classical results of \cite{RobinsonZakon}.}
Thus, by the same reasoning as before, it suffices to prove the claim for $\Gamma$ divisible.
For $\Gamma$ divisible, also $\Gamma\oplus\mathbb{Q}$ is divisible, hence $\Gamma\equiv\Gamma\oplus\mathbb{Q}$
since the theory of divisible ordered abelian groups is complete \cite[Theorem 4.1.1]{PrestelDelzell}.
\end{proof}

\begin{Example}\label{Ex:ZZQ}
Since all archimedean groups are regular, Lemma \ref{lem:regularquotient} implies that
all ordered abelian groups $\Gamma$ of finite rank satisfy $\Gamma\equiv\Gamma\oplus\mathbb{Q}$.
This includes in particular the groups $\mathbb{Z}\oplus\dots\oplus\mathbb{Z}$
that occur as value groups of higher dimensional local fields.
\end{Example}

\begin{Example}\label{Ex:FFQ}
The condition $F\equiv F((\mathbb{Q}))$ is satisfied for the following fields $F$:
\begin{enumerate}
\item[(a)] $F$ is algebraically closed
\item[(b)] $F$ is real closed
\item[(c)] $F$ is $p$-adically closed
\item[(d)] $F$ admits a henselian valuation $v$ with $\Gamma_v=\mathbb{Z}$ and ${\rm char}(\bar{F}_v)=0$
\end{enumerate}
Indeed, in (a), $F((\mathbb{Q}))$ is again algebraically closed and the theory of algebraically closed fields of fixed characteristic is complete. Similarly for (b) and (c). In (d), applying the Ax-Kochen-Ershov theorem three times gives that 
$F\equiv \bar{F}_v((\mathbb{Z}))\equiv\bar{F}_v((\mathbb{Z}))((\mathbb{Q}))\equiv F((\mathbb{Q}))$, since $\mathbb{Z}\equiv\mathbb{Z}\oplus\mathbb{Q}$.
\end{Example}

It should now be clear that we get a complete understanding of the $\mathcal{L}_{\rm Mac}$-definability of the henselian valuations on all fields of the form $F((t_1))\dots((t_n))$ where $F$ is a local field of characteristic zero. Since the uniformity in Theorem \ref{thm:A} depends on $|F^\times/(F^\times)^2|$, which is $1$ for $F=\mathbb{C}$, $2$ for $F=\mathbb{R}$ and $4$ for $F=\mathbb{Q}_p$, we do not formulate a general result but rather discuss one family of examples in detail:

\begin{Example}
The $3$-dimensional local field $K=\mathbb{Q}_\ell((t))((s))$ has three non-trivial hen\-selian valuations:
The valuation $v_1$ with value group $\mathbb{Z}$ and residue field $\mathbb{Q}_\ell((t))$,
the valuation $v_2$ with value group $\mathbb{Z}\oplus\mathbb{Z}$ and residue field $\mathbb{Q}_\ell$,
and the valuation $v_3$ with value group $\mathbb{Z}\oplus\mathbb{Z}\oplus\mathbb{Z}$ and residue field $\mathbb{F}_\ell$.
The definability of these valuations is as follows:
\begin{center}
\begin{tabular}{|c|c|c|c|c|}
\hline
& $\exists$ in $\mathcal{L}_{\rm Mac}$ & $\forall$ in $\mathcal{L}_{\rm Mac}$ & $\exists\forall$ in $\mathcal{L}_{\rm ring}$ & $\forall\exists$ in $\mathcal{L}_{\rm ring}$ \\ \hline
$v_1$ & No (a) & Yes (d) & Yes (g)  & Yes (i) \\ \hline
$v_2$ & No (b)  & Yes (e)  & ? & Yes (i)\\ \hline
$v_3$ & Yes (c)  & Yes (f)  & Yes (h)  & Yes (i)\\ \hline
\end{tabular}
\end{center}
Here, {\em Yes} means ``uniform for all odd prime numbers $\ell$'', and {\em No} means ``not even for a fixed $\ell$''.
The question mark indicates that neither do we know that $v_2$ is $\exists\forall$-definable in $\mathcal{L}_{\rm ring}$ for any fixed $\ell$,
nor do we know that there is no such definition that works uniformly for all $\ell$.
\end{Example}

\begin{proof}
\begin{enumerate}
\item[(a)] The value group of $v_1$ is $\mathbb{Z}\equiv\mathbb{Z}\oplus\mathbb{Q}$, and the residue field of $v_1$
 is $F=\mathbb{Q}_\ell((t))$, which carries a henselian valuation with value group $\mathbb{Z}$ and residue field $\mathbb{Q}_\ell$
 of characteristic zero, hence $F\equiv F((\mathbb{Q}))$ by Example \ref{Ex:FFQ}(d). Therefore, Proposition \ref{prop:noE} applies.
\item[(b)] The value group of $v_2$ is $\Gamma_{v_2}=\mathbb{Z}\oplus\mathbb{Z}$, so $\Gamma_{v_2}\equiv\Gamma_{v_2}\oplus\mathbb{Q}$
 by Example \ref{Ex:ZZQ}. The residue field of $v_2$ is $F=\mathbb{Q}_\ell$, so $F\equiv F((\mathbb{Q}))$ by Example \ref{Ex:FFQ}(c).
 So, again Proposition~\ref{prop:noE} applies.
\item[(c)] Since $v_3$ has finite residue field $\mathbb{F}_\ell$, this follows from Theorem \ref{thm:E}.
\item[(f)] Since $v_3$ has residue field $\mathbb{F}_\ell$ and $|\mathbb{F}_\ell^\times/(\mathbb{F}_\ell^\times)^2|=2$,
 and $\Gamma_{v_3}$ is discrete (so in particular has no nontrivial $2$-divisible convex subgroup)
 we can apply Theorem \ref{thm:A} with $p=2$ and $n=1$.
\item[(e)] Since $v_2$ has residue field $\mathbb{Q}_\ell$ and $|\mathbb{Q}_\ell^\times/(\mathbb{Q}_\ell^\times)^2|=4$ by Hensel's lemma,
 and $\Gamma_{v_2}$ is discrete,
  we can apply Theorem \ref{thm:A} with $p=2$ and $n=2$.
\item[(d)] Since $v_1$ has residue field $\mathbb{Q}_\ell((t))$ and $|\mathbb{Q}_\ell((t))^\times/(\mathbb{Q}_\ell((t))^\times)^2|=8$ by Hensel's lemma, and $\Gamma_{v_1}$ is discrete, we can apply Theorem \ref{thm:A} with $p=2$ and $n=4$.
\item[(g)] Since $v_1$ has value group $\mathbb{Z}$, this is Theorem \ref{thm:Z}.
\item[(h)] This follows from the fact that there is an $\exists$-definition in $\mathcal{L}_{\rm Mac}$.
\item[(i)] This follows from the fact that there is an $\forall$-definition in $\mathcal{L}_{\rm Mac}$.
\end{enumerate}
\end{proof}

\section*{Acknowledgements}

The authors would like to thank Will Anscombe and Franziska Jahnke for helpful comments on a previous version,
and Immanuel Halupczok for some help with ordered abelian groups and the paper \cite{CluckersHalupczok}.

\end{document}